\newcommand{\R}{\ensuremath{\mathbb{R}}}
\newcommand{\CB}{\ensuremath{\mathcal{B}}}
\newcommand{\CM}{\ensuremath{\mathcal{M}}}
\newcommand{\CN}{\ensuremath{\mathcal{N}}}
\newcommand{\CO}{\ensuremath{\mathcal{O}}}
\newcommand{\CU}{\ensuremath{\mathcal{U}}}
\newcommand{\ov}{\overline}
\newcommand{\G}{\Gamma}
\newcommand{\f}{\varphi}
\newcommand{\al}{\alpha}
\newcommand{\U}{\ensuremath{\mathcal{U}}}
\newcommand{\x}{\mathbf{x}}
\newcommand{\y}{\mathbf{y}}
\newcommand{\sgn}{\mathrm{sign}}
\newcommand{\de}{\delta}
\newcommand{\p}{\partial}
\newtheorem {theorem} {Theorem}
\newtheorem {definition} {Definition}
\newtheorem {lemma}  {Lemma}
\newtheorem {remark} {Remark}
\newtheorem {mtheorem} {Theorem}
\begin{document}
\renewcommand{\arraystretch}{1.5}

\title[Sliding Shilnikov Connection in Predator-Prey Model]
{Sliding Shilnikov Connection in Filippov-type Predator-Prey Model}

\author[T. Carvalho, D. D. Novaes and L. F. Gon\c calves]
{Tiago Carvalho$^1,$ Douglas D. Novaes$^2$ and L. F. Gon\c calves$^3$}

\address{$^1$ Departamento de Computa\c{c}\~{a}o e Matem\'{a}tica, Faculdade de Filosofia, Ci\^{e}ncias e Letras de Ribeir\~{a}o Preto,
              Universidade de S\~{a}o Paulo, Av. Bandeirantes, 3900, CEP 14040-901, Ribeir\~{a}o Preto, SP, Brazil.}\email{tiagocarvalho@usp.br}

\address{$^2$ Departamento de Matem\'{a}tica, Universidade
             Estadual de Campinas, Rua S\'{e}rgio Buarque de Holanda, 651, Cidade Universit\'{a}ria Zeferino Vaz, CEP 13083-859, Campinas, SP,
             Brazil.} \email{ddnovaes@unicamp.br}

\address{$^3$ Instituto de Bioci\^{e}ncias, Letras e Ci\^{e}ncias Exatas, Universidade Estadual Paulista (UNESP), Rua Crist\'{o}v\~{a}o Colombo, 2265,  CEP 15054-000, S\~{a}o Jos\'{e} do Rio Preto, SP, Brazil.}\email{luizfernandonandoo11@gmail.com}
\subjclass[2010]{34A36,34A26,37C29,34C28,37B10}

\keywords{prey switching model, piecewise smooth vector fields, Shilnikov connection, sliding dynamics, chaos}

\maketitle

\begin{abstract}
Recently, a piecewise smooth differential system was derived as a model of a 1 predator-2 prey interaction where the predator feeds adaptively on its preferred prey and an alternative prey.  In such a model, strong evidence of chaotic behavior was numerically found. Here, we revisit this model and  prove the existence of a Shilnikov sliding connection when the parameters are taken in a codimension one submanifold of the parameter space. As a consequence of this connection, we conclude, analytically, that the model behaves chaotically for an open region of the parameter space.
\end{abstract}


\section{Introduction}
In ecology, prey switching refers to a predator’s adaptive change of habitat or diet in response to prey abundance and has been observed in many predator species \cite{Allen,Gendron,Greenwood,Leeuwen}. A predator is said to be switching between prey species if  the number of attacks upon a species is disproportionately large when the species is abundant relative to other prey, and disproportionately small when the species is relatively rare \cite{Murdoch,Leeuwen}.
Switching in predators is often related to stabilizing mechanisms of prey populations and  is a possible explanation for coexistence \cite{Abrams,Krivan}. Intuitively,  predators tend to feed most heavily upon the most abundant prey species. As the prey species declines, the predator ``switches'' the major fraction of its attacks to another prey that has become the most abundant. This way, no prey population is drastically reduced nor becomes very abundant \cite{Murdoch}.

Using the principle of optimal foraging \cite{Stephens}, Piltz et al. \cite{Piltz} have introduced a model of a 1 predator-2 prey interaction as a piecewise differential system of kind
\begin{equation}\label{pss}
\dot \x=Z(\x)=F(\x)+\sgn(h(\x))G(\x),\\
\end{equation}
where $\x\in\R_{>0}^3,$ $h\colon\R_{>0}^3\rightarrow\R$ is linear, $F,G$ are smooth functions, and $\sgn:\R\setminus\{0\}\rightarrow\{-1,1\}$ stands for the sign function. Here, $\Sigma=h^{-1}(0)$ is called the switching manifold (or discontinuity manifold). The predator is assumed to instantaneously switch its food preference according to the availability of preys in the environment. This sudden change in the food preference of the predator induces discontinuities in the mathematical model used to describe such behavior.

The notion of trajectories for piecewise smooth differential systems of kind \eqref{pss} was stated by Filippov in \cite{F}. Nowadays, the differential equation \eqref{pss} is called Filippov system. It is worth mentioning the existence of a vast literature on Filippov systems modeling real phenomena in many other areas of applied science. For instance, see \cite{Rossa} for applications in  control theory, \cite{Brogliato,Dixon,Leine} in mechanical models, \cite{CarCrisPagTon-PhysicaD-2017,Kousaka} in electrical circuits, \cite{diBern-relay,Jac-To} in relay systems, \cite{GuptaGakkhar2016,ZhangTang2014} for biological models, \cite{RMCGslidingmode} for cancer modeling,  among others.  In all these applications, the discontinuity is due to an abrupt change in the differential system when some threshold is crossed.

Similar models were quite intensively studied during recent years in the context of the {\it ideal free distribution} \cite{Fretwell72,Fretwell1969},  which in essence is a hypothesis on how animals are distributed in a space constituted by habitats of varying suitability 
\cite{Smith72}.  In fact, one of the first to analyze equilibria of such models was Holt \cite{Holt1977} in 1977, who showed that the shared predator causes the so-called {\it apparent competition} between the two prey species,  that is, the presence of either species leads to a reduced population density for the other species at equilibrium. Filippov population models with prey switching were introduced by Colombo and Krivan \cite{Colombo93} in 1993  and extensively studied in subsequent articles (see, for instance, \cite{Krivan97,Kivan2006a,Kivan2008}).  In all these models, the evolution of the predator population $P(t)$ is ruled by piecewise smooth differential equations of the form:
\[
\dfrac{dP}{dt}=\left\{\begin{array} {l}
(e\, q_1 \beta_1 p_1 -m)P\quad\text{if} \quad \beta_1 p_1 -  \beta_2 p_2>0,\vspace{0.2cm}\\ 
(e\, q_2 \beta_2 p_2 -m)P\quad\text{if} \quad \beta_1 p_1 -  \beta_2 p_2<0,
\end{array}\right.
\]
where $p_1(t)$ and $p_2(t)$ represent the two prey populations.
This formulation assumes that predators' preference for prey is evolutionarily optimized. In the aforementioned models, $q_1\geq0$ and $q_2\geq0$ were interpreted as probabilities with which predators forage on preys $p_1$ and $p_2,$ respectively. This led to the natural trade-off in those models provided by $q_1 + q_2 = 1,$ which is a special case of the model introduced by Piltz et al. \cite{Piltz} that will be addressed in the present study (see Section \ref{mr}). 
Under this special condition, Boukal and Krivan \cite{Boukal99} proved the existence of a global attractor for the sliding dynamics by means of a suitable Lyapunov function.

Piltz et al. \cite{Piltz} found evidence that for a given choice of parameters their model exhibits chaotic behavior. Chaotic behavior can be understood as the existence of an invariant set for which the dynamics is transitive, sensitivity to initial conditions, and have dense periodic points (see, for instance, \cite{Devaney86,Meiss07,Wiggins90}, and Definitions \ref{definicao transitividade}, \ref{definicao sensibilidade}, and  \ref{definicao caotico} of Section \ref{prel}). For the biological model in question,  chaotic behavior means that for a given initial condition of population density of the species involved one cannot estimate (even vaguely) its long-term evolution. Hence, knowledge of chaotic behavior in a specific ecological model is of major importance, particularly for experimentalists who need to be aware of the potential implications of chaos for long-term predictions, and the fact that sustained ``irregular'' fluctuations may be due to chaos \cite{Hastings}.

In the smooth differential systems context, chaotic behavior may be tracked by studying the existence of objects previously known to be chaotic. This is the case of a {\it Shilnikov homoclinic orbit}, which is a trajectory connecting a hyperbolic saddle-focus equilibrium to itself, bi-asymptotically  (see \cite{S1,S2,T1}).

In the Filippov theory, pseudo-equilibria are special points on the switching manifold that must be distinguished and treated as typical singularities (see, for instance, \cite{F,Teixeira12}). As such, we can define the {\it sliding Shilnikov orbit} (see Definition \ref{defshil}), which is a trajectory in the Filippov terms connecting a hyperbolic pseudo saddle–focus to itself in an infinity time at least by one side, forward or backward. This object has been first considered in \cite{NT}, where some of their properties were studied. In particular, the existence of infinitely many sliding periodic solutions near a sliding Shilnikov orbit has been proven.

In \cite{NPV}, using the well-known theory of Bernoulli shifts, a full topological and ergodic description of the dynamics of Filippov systems near a sliding Shilnikov orbit $\Gamma$ was provided. In particular, it was established the existence of a set $\Lambda$ such that the restriction to $\Lambda$ of the first return map $\pi$ defined near $\G,$ is topologically conjugate to a Bernoulli shift with infinite topological entropy. This ensures that $\pi$ and, consequently, the flow are chaotic. As a consequence, given any natural number $m\geq 1,$ one can find infinitely many periodic points of the first return map with period $m$ and, consequently, infinitely many closed orbits near $\G.$ In addition, it was also provided that such a chaotic behavior persists, in some sense, under small perturbations. Namely, let $Z_{\al},$ $\al\in\R,$ be a smooth family of Filippov systems such that $Z_0$ has a sliding Shilnikov orbit. Then, for $|\al|$ sufficiently small, one can find a set $\Lambda_{\al}$ such that the restriction to $\Lambda_{\al}$ of the perturbed first return map $\pi_{\al}$ is topologically conjugate to a Bernoulli shift, eventually, of finite symbols. In this case, one can still find infinitely many closed orbits.

Our approach consists in finding a set of parameters for which the considered model admits a sliding Shilnikov orbit. This ensures, analytically, that the model behaves chaotically for parameters taken in a neighborhood of this set. 

The rest of this article is organized as follows. In Section \ref{prel},we introduce the essential theory on piecewise smooth vector fields and sliding Shilnikov orbits. The main result is stated in Section \ref{mr} while its proof as well as the analysis of the model are provided in Section \ref{proofmr}. In Section \ref{simul}, we perform a numerical simulation to exhibit a sliding Shilnikov connection. Some conclusion remarks and further directions are provided in Section \ref{sec:conc}.

\section{Preliminary concepts and known results}\label{prel}

This section serves to present the essential theory on Piecewise Smooth Vector Fields. The concept of \textit{Sliding Shilnikov orbits} shall also be defined and some known results regarding the chaotic behavior near a sliding Shilnikov orbit will be provided.

A piecewise smooth vector field on $\R^3$ is a pair of C$^r$-vector fields $X$ and  $Y,$ where $X$ and  $Y$  are restricted to regions of $\R^3$  separated by a smooth  codimension one manifold $\Sigma.$  The \textit{switching manifold} $\Sigma$ is obtained considering $\Sigma=h^{-1}(0),$ where $h$ is a differentiable function having $0$ as a regular value. Note that $\Sigma$ is the separating boundary of the regions $\Sigma^+=\{\x\in \R^3 \, | \, h(\x) >0\}$ and $\Sigma^-=\{\x \in \R^3 \,| \, h(\x)< 0\}.$ Thus, a piecewise smooth vector field is provided by:
\begin{equation}\label{omega}
Z(\x)=\left\{\begin{array}{l}
X(\x),\quad\textrm{if}\quad h(\x)> 0,\vspace{0.1cm}\\
Y(\x),\quad\textrm{if}\quad h(\x)< 0,
\end{array}\right.
\end{equation}
where $\x \in \R^3.$ System \eqref{omega} is denoted by $Z=(X,Y).$ Call $\chi^r$ the space of piecewise smooth vector fields. We endow $\chi^r$ with the product topology.

In order to establish a definition for the trajectories of $Z$ and investigate its behavior, we need a criterion for the transition of the orbits between $\Sigma^+$ and $\Sigma^-$ across $\Sigma.$ The contact between the vector field $X$ (or $Y$) and the switching manifold $\Sigma$ is characterized by the expression $$Xh(\x)=\left\langle \nabla h(\x), X(\x)\right\rangle$$ and, for $i\geq 2,$  $X^i h(\x)=\left\langle \nabla X^{i-1} h(\x), X(\x)\right\rangle,$ where $\langle . , . \rangle$ is the usual inner product in $\R^3.$ The basic results of differential equations in this context were stated by Filippov \cite{F}. We can divide the switching manifold in the following sets.

\begin{definition} Consider a piecewise smooth vector field $Z=(X,Y).$ 
	\begin{itemize}
		\item [$(i)$] A set $\Sigma^c$ is called a crossing set if $Xh(\x)\cdot Yh(\x) > 0$ for all $\x\in\Sigma^c$;
		\item [$(ii)$] A set $\Sigma^e$ is called a escaping set if $Xh(\x)>0$ and $Yh(\x) < 0$ for all $\x\in\Sigma^e$;
		\item [$(iii)$] A set $\Sigma^s$ is called a sliding set if $Xh(\x)<0$ and $Yh(\x) > 0$ for all $\x\in\Sigma^s.$
	\end{itemize}
\end{definition}

The {\it escaping} $\Sigma^e$ or {\it sliding} $\Sigma^s$ regions are respectively defined on points of $\Sigma$ where both vector fields $X$ and $Y$ simultaneously point outwards or inwards from $\Sigma$ while the interior of its complement in $\Sigma$ defines the {\it crossing region} $\Sigma^c$ (see Fig. \ref{regioescostdeslizeescape}). The complementary of the union of those regions is the set formed by the {\it tangency} points between $X$ or $Y$ with $\Sigma$ (see, for instance, \cite{Teixeira12}). 

\begin{figure}[h!]
	\begin{center}
		\includegraphics[width=2.5in]{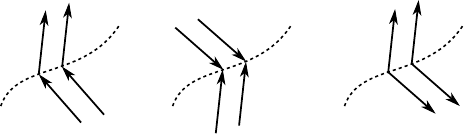}
	\end{center}
	\caption{Crossing $\Sigma^c$, sliding $\Sigma^s,$ and escaping $\Sigma^e$ regions in Filippov systems, respectively.}\label{regioescostdeslizeescape}
\end{figure}

\begin{definition}
	A point $\x\in\Sigma$ is called a {\it tangency} point of $X$ (resp. $Y$)  if it satisfies $Xh(\x)=0$ (resp. $Yh(\x)=0$). A tangency point is called a  \textit{fold} point of $X$ if $X^2h(\x)\neq0.$ Moreover, $\x\in\Sigma$ is a visible (resp. invisible) fold point of $X$ if $X^2h(\x)>0$ (resp.  $X^2h(\x)<0$).  A tangency point is called a  \textit{cusp} point of $X$ if $X^2h(\x)=0$ and $X^3h(\x)\neq0.$
\end{definition}

In order to define a trajectory of a PSVF passing through a crossing point, it is enough to concatenate the trajectories of $X$ and $Y$ by that point. However, in the sliding and escaping sets we need to define an auxiliary vector field. Thus, we consider the Filippov's convention (see \cite{F}) and a new vector field is defined on $\Sigma^s \cup \Sigma^e.$

\begin{definition} Given a point $\x \in \Sigma^s\cup\Sigma^e,$ we define the \textit{sliding vector field} at $\x$ as the vector field $Z^{s}(\x)=\y-\x,$ with $\y$ being the point of the segment joining $\x+X(\x)$ and $\x+Y(\x)$ such that $\y-\x$ is tangent to $\Sigma.$
\end{definition}
The sliding vector field is provided by the expression
\begin{equation}\label{slisys}
Z^s(\x)=\dfrac{Y h(\x) X(\x)-X h(\x) Y(\x)}{Y h(\x)- Xh(\x)} \,\,\mbox{, } \x \in \Sigma^s\cup\Sigma^e.
\end{equation}
In this scenario, the trajectories $\Gamma_{Z}(t,q)$ of $Z$ are considered a concatenation of trajectories of $X,$ $Y$ and $Z^s.$

The points $\x \in \Sigma^s\cup\Sigma^e$ such that $Z^s(\x)=0$ are called \textit{pseudo equilibrium of $Z$}. A pseudo-equilibrium is called  {\it hyperbolic pseudo-equilibrium} when it is a hyperbolic critical point of $Z^s.$ In particular, if $\x^*\in\Sigma^s$ (resp. $\x^*\in\Sigma^e$) is an unstable (resp. stable) hyperbolic focus of $Z^s,$ then we call $\x^*$ a {\it hyperbolic saddle-focus pseudo-equilibrium} or just {\it hyperbolic pseudo saddle-focus}.

\begin{definition}\label{defshil}
	Let $Z=(X,Y)$ be a piecewise continuous vector field having a hyperbolic pseudo saddle-focus $p\in \Sigma^{s}$ $($resp. $p\in \Sigma^{e}),$ and let  $q\in\p\Sigma^s$ $($resp. $q\in\p\Sigma^e)$ be a visible fold point of the vector field $X$ such that
	\begin{itemize}
		\item[$(i)$] the orbit passing through $q$ following the sliding vector field $Z^s$ converges to $p$ backward in time $($resp. forward in time$)$;
		
		\item[$(ii)$] the orbit starting at $q$ and following the vector field $X$ spends a time $t_0>0$ $($resp. $t_0<0)$ to reach $p.$
	\end{itemize}
	Thus, through $p$ and $q$ a sliding loop $\G$ is easily characterized. We call $\G$ a {\bf sliding Shilnikov orbit} $($see Fig. \ref{slidingshil}$).$
\end{definition}

\begin{figure}[h]
	\begin{center}
		\begin{overpic}[width=6.7cm]{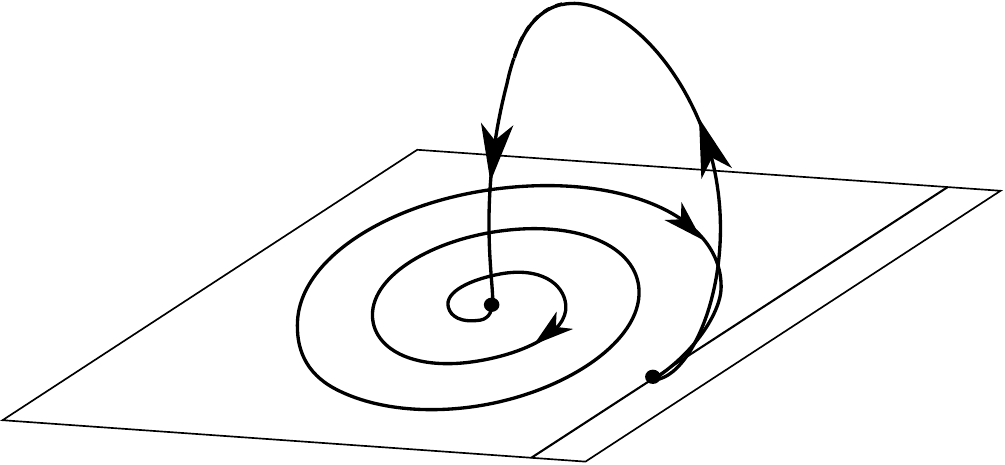}
			\put(66,42){$\G$}
			\put(50.5,15){$p$}
			\put(63.5,5.5){$q$}
			\put(10,6){$\Sigma^s$}
			\put(92,29){$\p \Sigma^s$}
		\end{overpic}
	\end{center}
	
	\bigskip
	
	\caption{ The point $p\in \Sigma^s$ is a hyperbolic pseudo saddle–focus. The trajectory $\G,$ called Shilnikov sliding orbit, connects $p$ to itself passing through the point $q\in\p \Sigma^s,$ the frontier of $\Sigma^s.$ Notice that the flow leaving $q$ reaches the point $p$ in a finite positive time, and approaches backwards to $p,$ asymptotically. }\label{slidingshil}
\end{figure}

Chaotic behavior of a dynamical system is usually understood as the existence of an invariant set for which the dynamics is transitive, sensitivity to initial conditions, and have dense periodic points (see, for instance, \cite{Devaney86,Meiss07,Wiggins90}). In this way, let us consider the following definitions.

\begin{definition}\label{definicao transitividade}
	The Filippov system \eqref{omega} is \textbf{topologically transitive} on an invariant set $W$ if for every pair of nonempty open sets $U$ and $V$ in $W$ there exist $\x\in U,$  a  trajectory of $Z,$ $\Gamma_{Z}(t,\x),$ and $t_{0}>0$ such that $\Gamma_{Z}(t_{0},\x) \in V.$
\end{definition}

\begin{definition}\label{definicao sensibilidade}
	The Filippov system \eqref{omega} exhibits \textbf{sensitive dependence} on a compact invariant set $W$ if there is a fixed $r>0$ satisfying $r < \textrm{diam}(W)/2$ such that for each $\x\in W$ and $\varepsilon>0$ there exist $\y\in B_{\varepsilon}(x)\cap W$ and positive global trajectories $\Gamma_{\x}^{+}$ and $\Gamma_{\y}^{+}$ passing through $\x$ and $\y,$ respectively, satisfying
	$$
	d(\Gamma_{\x}^{+}(t_0),\Gamma_{\y}^{+}(t_0))>r,
	$$
	where $t_0 \in \R$ is positive, $d$ is the Euclidean distance and $diam(W)$ is the diameter of $W,$ i.e., the largest distance between two elements of $W.$
\end{definition}

\begin{definition}\label{definicao caotico}
	The Filippov system \eqref{omega} is chaotic on a compact invariant set $W$ if it is topologically transitive, exhibits sensitive dependence on $W,$ and has dense periodic orbits in $W.$
\end{definition}

The next theorem ensures that a Filippov system presenting a sliding Shilnikov connection is, in fact, chaotic. The proof of Theorem \ref{theo:topological} is performed in \cite{NPV}.

\begin{theorem}[\cite{NPV}]\label{theo:topological}
	Let $Z=(X,Y)$ be provided by \eqref{omega}. Assume that the Filippov system $Z$ admits a sliding Shilnikov orbit. Then, there exists a neighborhood $\CU\subset\chi^r$ of $Z$ such that each $\widetilde{Z}\in\CU$ admits an invariant compact set $\Lambda_{\widetilde{Z}},$ in which $\widetilde{Z}$ is chaotic.
\end{theorem}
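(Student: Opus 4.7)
The plan is to build a Poincar\'e-style first return map $\pi$ on a small cross-section sitting inside $\p\Sigma^s$ near the visible fold $q_0$, to exhibit inside this cross-section a $\pi$-invariant Cantor-like set $\La$ on which $\pi$ is topologically conjugate to a Bernoulli shift on countably many symbols, and then to transport the dynamics back to the flow via suspension. Robustness of every ingredient (hyperbolic pseudo saddle--focus, visible fold, transverse arrival time $t_0$) under $C^r$-perturbations of $(X_0,Y_0)$ will then produce the required open neighborhood $\CU$ of $Z_0$ in $\chi^r$.

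In more detail, after straightening $\Sigma$ to a hyperplane and putting $Z_0^s$ in its linear normal form near $p_0$, I would decompose $\pi$ as $\pi=\pi_2\circ\pi_1$ where $\pi_1$ is the ``flight map'' along $X_0$, taking a point of $\p\Sigma^s$ near $q_0$, following the detaching $X_0$-orbit as in Figure \ref{slidingshil}, and landing it back on $\Sigma^s$ near $p_0$; and $\pi_2$ is the ``spiraling map'' obtained by flowing along the sliding field $Z_0^s$ forward in time until the trajectory first hits $\p\Sigma^s$. Condition $(jj)$ of Definition \ref{defshil} makes $\pi_1$ at least $C^r$ on a small arc transverse to $q_0$, and since $p_0$ is an unstable hyperbolic focus of $Z_0^s$ with eigenvalues $\la\pm i\omega$, $\la,\omega>0$, every orbit winding around $p_0$ meets $\p\Sigma^s$ infinitely often. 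This produces a countable family $\{I_n\}_{n\geq n_0}$ of disjoint arcs in the domain of $\pi$, each mapped across the full cross-section with multiplicative expansion of order $e^{2\pi\la/\omega}$ between consecutive labels.

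From this geometric picture the symbolic dynamics follows by a standard Conley--Moser/graph-transform argument: the maximal invariant set
$$
\La \;=\; \bigcap_{k\in\Z}\pi^{-k}\!\left(\bigcup_{n\geq n_0} I_n\right)
$$
is in bijection with $\N^{\Z}$, with $\pi|_{\La}$ conjugate to the full shift $\sigma$, which has infinite topological entropy and so is transitive, sensitive, and has dense periodic points. Suspending $(\La,\pi)$ by the return time yields a compact invariant set $\La_{Z_0}$ for the Filippov flow on which all three properties in Definitions \ref{definicao transitividade}--\ref{definicao caotico} hold. For $Z$ close to $Z_0$ in $\chi^r$, the pseudo saddle--focus $p_Z$, the fold $q_Z$ and both maps $\pi_{1,Z},\pi_{2,Z}$ vary continuously, so the hyperbolic structure of $\pi$ and the alignment of the strips $I_n$ persist, yielding a subshift-conjugate invariant set $\La_Z$ and the desired chaotic behavior.

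The main technical obstacle is that $\pi$ is doubly singular at the ``ends'' of the construction: $\pi_1$ has a square-root branch point inherited from the visible fold at $q_0$, while $\pi_2$ has a logarithmic singularity due to the infinite winding time of $Z_0^s$ near the focus $p_0$. Showing that the exponential expansion produced by the focus beats the square-root contraction of the fold on arcs $I_n$ for all sufficiently large $n$, that the resulting thin strips are properly aligned to give a Conley--Moser infinite horseshoe, and that all these estimates remain uniform under $C^r$-small perturbations, is where essentially all of the work in \cite{NPV} is located.
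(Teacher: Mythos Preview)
The paper does not supply its own proof of this theorem: it is quoted verbatim from \cite{NPV}, with the sentence ``The proof of Theorem \ref{theo:topological} is performed in \cite{NPV}'' immediately preceding it. Your outline is consistent with the summary of the \cite{NPV} argument given in the introduction (first return map $\pi$ near $\Gamma$, an invariant set $\Lambda\subset\partial M^s$ on which $\pi$ is conjugate to a Bernoulli shift with infinite topological entropy), so there is nothing in the present paper to compare against beyond that.
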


\section{Main results}\label{mr}
Ciliates are eukaryotic single cells that belong to the protist kingdom. They occur in aquatic environments and feed on small phytoplankton, constituting a relevant link between levels of marine and freshwater food webs (see \cite{TG1}). Coexistence of species in a shared environment may arise from ecological trade-offs (see \cite{KneitelChase}), which appear in many situations in ecology. Lake Constance is a freshwater lake situated on the German-Swiss-Austrian border that has been under scientific investigation for decades,  and a substantial amount of data on the biomass of several phytoplankton and zooplankton species is available (see \cite{Gaedke,TG1,TG2}). Based on the available data, Piltz et al.  \cite{Piltz} derive the following piecewise smooth model for a 1 predator-2 prey interaction where the predator feeds adaptively on its preferred prey and an alternative prey:

\begin{equation}\label{sistema inicial}
\dot\x=\left\{\begin{array}{ll}
\left(
\begin{array}{c}
(r_1 - \beta_1 P)p_1\\
r_2 p_2\\
(e\, q_1 \beta_1 p_1 -m)P
\end{array}
\right)&\textrm{if}\quad H(p_1,p_2,P)>0,\vspace{0.3cm}\\

\left(
\begin{array}{c}
r_1 p_1\\
(r_2 - \beta_2 P)p_2\\
(e\, q_2 \beta_2 p_2 -m)P
\end{array}
\right)&\textrm{if}\quad H(p_1,p_2,P)<0,
\end{array}\right.
\end{equation}

where $\dot\x=\big(\dot p_1,\dot p_2,\dot P\big)^T,$ $(p_1,p_2,P)\in\R_{\geq0}^3$ and $$H(p_1,p_2,P)=\beta_1 p_1 - a_q \beta_2 p_2.$$ The plane $S=H^{-1}(0)$ is the switching manifold of the piecewise differential system \eqref{sistema inicial}. The variables of the model \eqref{sistema inicial}, $P, p_1,$ and $p_2$ represent the density of the predator population, preferred prey, and alternative prey, respectively.
Regarding the parameters, $q_i\geq0$ represent the preference for prey $i,$ $i\in\{1,2\},$ and $a_q > 0$ is the slope of the preference trade-off. The intercept of the preference trade-off $b_q=q_2-a_q q_1$ is assumed to satisfy $b_q\geq0.$ In addition, $e>0$ is the proportion of predation that goes into predator
growth, $\beta_1>0$ and $\beta_2>0$ are, respectively, the death rates of the preferred and alternative prey due to predation. Finally, $m>0$ is the predator per capita death rate per day and $r_1>r_2>0$ are the per capita growth rates of the preferred and alternative prey, respectively.

The above constraints imply that the parameters of the Filippov system \eqref{sistema inicial} lie in a subset of the Euclidean space $\R^9,$ namely $$\eta=(r_1,r_2,a_q,q_1,q_2,\beta_1,\beta_2,m,e)\in\CM=R\times Q\times\R_{> 0}^4,$$ where $R=\{(r_1,r_2)\in\R^2_{> 0}:\, r_1>r_2\}$ and $Q=\{(a_q,q_1,q_2)\in\R_{> 0}\times\R_{\geq 0}^2:\,q_2\geq a_q q_1\}.$ The set $\CM$ is called {\it space of parameters} which is a $9$-dimensional submanifold of $\R^9$ with boundary and corner.

In what follows, we state the main result of this paper that guarantees that exist parameters such that the prey switching model \eqref{sistema inicial} possesses a sliding Shilnikov orbit and exhibits chaos. This is proved in Section \ref{proofthm2}.
\begin{mtheorem}\label{maintheorem}
	There exists a codimension one submanifold $\CN$ of $\CM$ such that the Filippov system \eqref{sistema inicial} possesses a sliding Shilnikov orbit whenever $\eta\in\CN.$ Moreover, there exists a neighborhood $\CU\subset \CM$ of $\CN$ such that the Filippov system \eqref{sistema inicial} behaves chaotically whenever $\eta\in \CU.$
\end{mtheorem}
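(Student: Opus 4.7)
The overall plan is to reduce the existence of a sliding Shilnikov orbit for \eqref{sistema inicial} to a single scalar equation on the parameter space $\CM$, to solve that equation along an explicit one--parameter family of parameters, and then to invoke Theorem \ref{theo:topological} to upgrade existence of the connection to chaos on an open neighborhood $\CU\supset\CN$. First, using formula \eqref{slisys} together with the piecewise structure of \eqref{sistema inicial}, I compute the sliding vector field $Z^s$ on the discontinuity plane $S=H^{-1}(0)$; since $X$ and $Y$ are polynomial and $H$ is linear, $Z^s$ is rational on $S$, so solving $Z^s(\x)=0$ subject to $H(\x)=0$ and $\x\in\R_{>0}^3$ is an algebraic problem that yields one or more pseudo--equilibria $p(\eta)$. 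Linearising $Z^s$ on the plane $S$ at $p$ and imposing
\[
\tra J(p)>0,\qquad \det J(p)>0,\qquad (\tra J(p))^2<4\det J(p),
\]
together with $Xh(p)<0$ and $Yh(p)>0$ so that $p\in\Sigma^s$, carves out an open set $\CM_0\subset\CM$ on which $p$ is an unstable hyperbolic focus of $Z^s$, i.e.\ a hyperbolic pseudo saddle--focus in the sense of Section \ref{prel}.

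Next, I locate the visible folds of $X$ on $\p\Sigma^s$: the tangency locus $\{Xh=0\}\cap S$ is a curve in $S$, and on the piece where also $X^2h>0$ and $Yh>0$ each point $q$ is a visible fold of $X$ bordering $\Sigma^s$, which I parametrize as $s\mapsto q(s,\eta)$. Since $p$ is an unstable focus of the planar flow $Z^s$, its backward basin in $\Sigma^s$ is open and contains a sub--interval of the fold arc, so condition $(j)$ of Definition \ref{defshil} holds automatically on that sub--interval. The closure condition $(jj)$ asks that the trajectory of $X$ starting at $q(s,\eta)$ meet $S$ again exactly at $p(\eta)$. Integrating $X$ on $\{h\geq 0\}$—where the equation for $p_2$ is linear and the $(p_1,P)$ block is a planar Lotka--Volterra system—defines a smooth first--return map $\pi_X:\p\Sigma^s\to S$, and $(jj)$ becomes
\[
\pi_X(q(s,\eta))=p(\eta)
\]
in local coordinates on $S$, i.e.\ two scalar equations in the $10$--dimensional unknown $(s,\eta)$. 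Solving one of them for $s$ by the implicit function theorem and substituting into the other yields a single smooth function $\widetilde\Phi:\CM_0\to\R$ whose regular zero set $\CN=\widetilde\Phi^{-1}(0)$ is the desired codimension one submanifold.

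The main obstacle will be exhibiting a concrete $\eta_0\in\CN$ and verifying the nondegeneracy $d\widetilde\Phi(\eta_0)\neq 0$, because $\pi_X$ has no closed algebraic expression in $\eta$: the Lotka--Volterra half of $X$ already produces a return map whose dependence on the parameters is transcendental. My plan for this step is to start from the biological parameter values used by Piltz et al.~\cite{Piltz}, for which numerical evidence of chaos has been reported, and to vary a single parameter, say the predator death rate $m$, along an interval $I\subset\R_{>0}$. Exploiting continuous dependence of $p(\eta)$ and of the fold arc $q(s,\eta)$ on $\eta$ together with monotonicity of the Lotka--Volterra return time in $m$, I expect to show that $\widetilde\Phi$ takes values of opposite signs at the endpoints of $I$; the intermediate value theorem then provides $m_*\in I$ with $\widetilde\Phi(m_*)=0$, and the same sign change yields $\p\widetilde\Phi/\p m(m_*)\neq 0$, so $\CN$ is a genuine codimension one submanifold near $\eta_0$. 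Finally, applying Theorem \ref{theo:topological} at each $\eta\in\CN$ and pulling back the resulting $\chi^r$--neighborhood through the smooth map $\eta\mapsto Z_\eta$ produces, by taking the union over $\CN$, an open set $\CU\subset\CM$ with $\CN\subset\CU$ on which \eqref{sistema inicial} is chaotic.
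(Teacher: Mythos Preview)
Your overall strategy matches the paper's---locate the pseudo saddle--focus $p$ of $Z^s$, parametrize the visible fold arc, impose the closure condition $\pi_X(q)=p$, and apply Theorem \ref{theo:topological}---but two steps are genuine gaps. The first is condition $(j)$: you assert that the backward basin of $p$ ``is open and contains a sub--interval of the fold arc,'' yet openness does not force the basin to meet $\partial\Sigma^s$ at all; a limit cycle of $Z^s$ could separate $p$ from the fold line. The paper resolves this in Lemma \ref{lemma2} by ruling out limit cycles via Bendixson--Dulac (Dulac function $1/(xz)$) and then using level curves of an auxiliary Lotka--Volterra first integral to show that the backward basin of $p$ is the entire positive quadrant of $\Sigma^s$. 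A further level--set argument with the first integral $F$ of \eqref{eq projecao} is still needed in Section \ref{sectshilnikov} to verify that the \emph{specific} fold point used lies on the segment $L\subset\partial\Sigma^s$ whose backward $Z^s$--orbit remains inside $\Sigma^s$.

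The second and larger gap is the existence and nondegeneracy of a zero of $\widetilde\Phi$. Varying $m$ moves both $p(\eta)$ \emph{and} the flow of $X$ (hence $\pi_X$), so the asserted ``monotonicity of the Lotka--Volterra return time in $m$''---even if you prove it---does not by itself produce a sign change for $\widetilde\Phi$; and a sign change alone never yields $\partial\widetilde\Phi/\partial m(m_*)\neq 0$ (consider $m\mapsto(m-m_*)^3$). The paper takes a different and sharper route. It first proves explicit asymptotics for the return curve $\mu(x_0)=(u(x_0),v(x_0))$ as $x_0\to\tau^-$ and as $r_2\to 0^+$ (Lemma \ref{lemma1}), uses these to exhibit a parameter box $[a,b]\times[c,d]$ on which $\mu$ lands in the correct portion of $\Sigma^s$ (Lemma \ref{lemma1_2}), and then, instead of hunting a root by the intermediate value theorem, solves the two closure equations $x_c=u(x_0)$, $z_c=v(x_0)$ \emph{algebraically} for the pair $(e,\beta_2)$ via the explicit formula \eqref{par2} for the pseudo--focus. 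The codimension--one structure of $\CN$ then comes from $u'(x_0)^2+v'(x_0)^2\neq 0$ (Lemma \ref{lemma1_2}), which is itself a consequence of the first integral $F$ and makes one of the parametrizing maps a local diffeomorphism.
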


\section{Proof of the main result}\label{proofmr}
Consider the differential piecewise differential system \eqref{sistema inicial}. In order to eliminate the dependence of the switching manifold on the parameters, let us consider the change of variables $x= \beta_1 p_1,$ $y= a_q \beta_2 p_2$ and  $z = \beta_1 P.$ In these new variables, \eqref{sistema inicial} writes

\begin{equation}\label{sistema inicial sem beta}
\big(\dot x,\dot y,\dot z\big)^T=\left\{\begin{array}{ll}
X(x,y,z)&\quad\textrm{if}\quad h(x,y,z)>0;\vspace{0.2cm}\\

Y(x,y,z)&\quad\textrm{if}\quad h(x,y,z)<0,
\end{array}\right.
\end{equation}
where $X(x,y,z)=\left((r_1 - z)x,r_2 y,(e q_1 x -m)z\right),$ 

$$Y(x,y,z)=\left(r_1 x,\left(r_2 - \dfrac{\beta_2 }{\beta_1}z\right)y,\left(\dfrac{e q_2 }{a_q}y -m\right)z\right),$$ 
$(x,y,z)\in\R_{\geq0}^3$ and $h(x,y,z)=x-y.$ Now, the switching manifold is provided by $\Sigma=h^{-1}(0)=\{(x,x,z):\,x\geq 0,\,z\geq0\}.$

\subsection{Dynamics of $X$ and $Y$ and their contacts with $\Sigma$}\label{sec dinamica no contato}
Notice that the plane $\Pi_y=\{y=0\}$ is invariant through the flow of $X.$ The restriction of $X$ onto the plane $\Pi_y$ reads
\begin{equation}\label{eq projecao}
\overline{X}(x,z)=\left(
\begin{array}{c}
(r_1 -  z)x\\
(e q_1  x -m)z
\end{array}
\right).\end{equation}
Moreover, the projection of each orbit of $X$ into the plane $\Pi_y$ coincides with an orbit of $\overline{X}.$ Indeed, the subsystems $(\dot{x}, \dot{z})$ and $\dot{y}$ are uncoupled. The equilibria of $\overline{X}$ are $E_1=(0,0)$ and $E_2=(m/(e q_1),r_1).$ The equilibrium $E_1$ is a saddle with eigenvectors $(1,0)$ and $(0,1)$ associated to the eigenvalues $r_1$ and $-m,$ respectively.  The equilibrium  $E_2$ has pure imaginary eigenvalues, namely $ \pm i \sqrt{m r_1} .$ Furthermore, $\overline{X}$ is a Lotka-Volterra system which has the following first integral:
\begin{equation}\label{first}
F(x,z)=  -m -r_1+e q_1 x + z - m \log\left(\dfrac{e q_1\,x}{m}\right) - r_1 \log\left(\dfrac{z}{r_1}\right).
\end{equation}
It implies that the equilibrium $E_2$ is a center (see Fig. \ref{Fig porj y=0}).
\begin{figure}[h]
	\begin{center}
		\begin{overpic}
			[width=7cm]
			{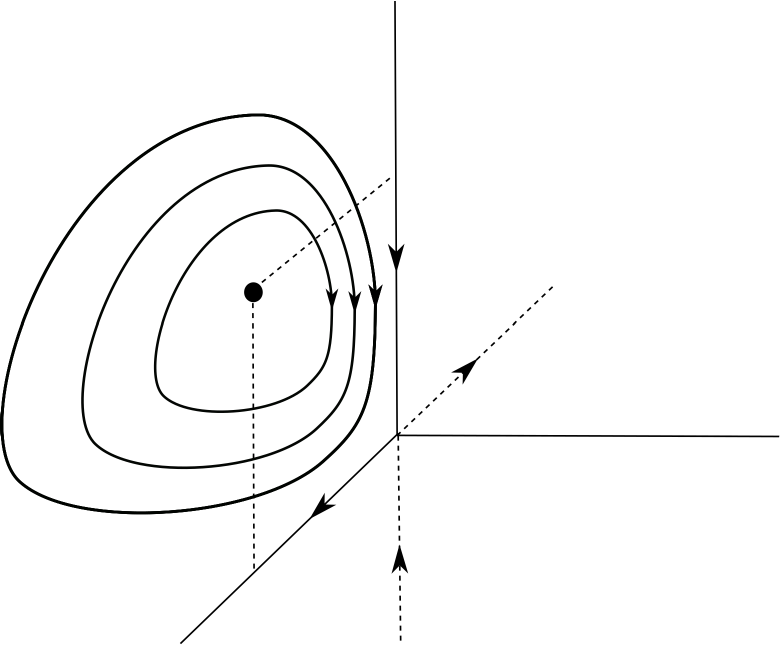}
			\put(19,-3){$x$}	\put(101,26){$y$}	\put(49,84){$z$}	\put(52,60){$r_1$}	\put(30,3){$\frac{m}{e q_1}$}
		\end{overpic}
	\end{center}
	\caption{Projection of the vector field $X$ onto the plane $\Pi_y.$}\label{Fig porj y=0}
\end{figure}
Furthermore, since \begin{equation}\label{eq dinamica direcao y}y(t)=y_0 \exp(r_2 t),\end{equation} the dynamics on the $y$-direction is unbounded increasing and the $X$-trajectories spiral from $\Pi_y$ toward $\Sigma,$  crossing $\Sigma.$ The trajectories of $X,$ on the domain $\R_{\geq0}^3,$ lie on cylinders around the straight line $$\ell=\{(m/(e q_1),y,r_1) \, | \, y\geq0\}.$$ See Fig. \ref{Fig cilindro invariante}.

\begin{figure}[h]
	\begin{center}
		\begin{overpic}
			[width=7cm]
			{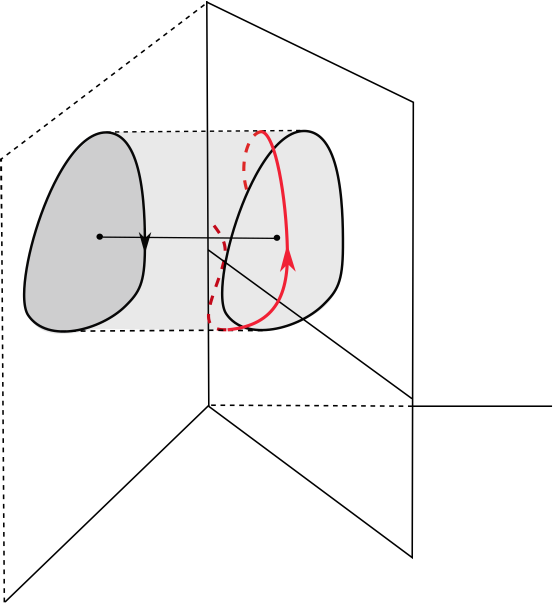}
			\put(-2,-3){$x$}	\put(92,32){$y$}	\put(33,101){$z$}	\put(65,3){$\Sigma$} \put(61,40){$S^{2}_{X}$}		
			
		\end{overpic}
	\end{center}
	\caption{Invariant cylinder of the vector field $X.$}\label{Fig cilindro invariante}
\end{figure}

The next step is to study the contacts of the vectors fields $X$ and $Y$ with the switching manifold $\Sigma.$ Let us consider $p=(x,x,z)\in\Sigma.$ Computing the Lie derivatives $Xh(p)$ and $Yh(p)$ we get:
\[	
Xh(p) = (r_1 - r_2 -  z) x\,\, \text{and}\,\, Yh(p) = \left(r_1 - r_2 + \dfrac{\beta_2 z}{\beta_1}\right) x.
\]
By solving equation $Xh(p)=0$ we conclude that the contacts between the vector field $X$ and the switching manifold $\Sigma$ occur at $S_X = S^{1}_X \cup S^{2}_X,$ where $S^{1}_X=\{(0,0,z):\,z\geq0\}$ and $S^{2}_X=\{(x,x,r_1-r_2):\,x>0\}.$ Analogously, solving the equation $Yh(p)=0,$ we conclude that the contacts between $Y$ and $\Sigma$ occurs at $S_Y = S^{1}_Y \cup S^{2}_Y$ where $S^{1}_Y=S^{1}_X$ and $S^{2}_Y=\{(x,x,-r_1+r_2)\}.$ The switching manifold $\Sigma$ is then partitioned into two open regions, namely \textit{sliding region} $\Sigma^s=\{(x,x,z)\in \Sigma \, | \, z> r_1-r_2 \}$ and  \textit{crossing region}  $\Sigma^c=\{(x,x,z)\in \Sigma\, | \, 0<z< r_1-r_2\}.$

Notice that the tangency line $S^{2}_X$ is the boundary of the sliding region $\Sigma^s.$ From Definition \eqref{defshil}, $S^{2}_X$ will play an important role in finding a sliding Shilnikov orbit. In order to determine the kind of contact between $X$ and $\Sigma$ occurring on $S^{2}_X,$ we compute the second Lie derivative $X^{2}h.$ Accordingly, let $\overline{p}=(x,x,r_1-r_2)\in S^{2}_X,$ so
$X^{2}h(\overline{p}) =  (r_1 - r_2) (m- e q_1 x) x.$
Solving the equation, $X^2h(\ov p)=0,$ we obtain two solutions, namely
\[
\overline{p}_1=\left(0,0,r_1 - r_2\right)\,\,\, \text{and} \,\,\, c=\left(\dfrac{m}{e q_1},\dfrac{m}{e q_1},r_1 - r_2\right).
\]
Moreover, we have that $X^{2}h(x,x,r_1 - r_2)>0$ for $0<x<m/(e q_1).$ Thus, $S^v_X=\{(x,x,r_1-r_2) \in S^{2}_X \, | \, 0<x<m/(e q_1)\}$ is a curve of {\it visible fold points} of $X$ and, therefore, the local trajectories of $X$ remain at the region where $X$ is defined (i.e. $h(x,y,z)>0$), before and after the tangential contact with $S^{v}_X.$  It is worth noting that $c\in \R^3_{\geq0}$ is a contact of cusp type.

In order to state the main result of this subsection (Lemma \ref{lemma1}), we introduce the following new parameters
\begin{equation}\label{par1}
\phi=r_1-r_2\quad \text{and}\quad\tau=\dfrac{m}{e\, q_1}.
\end{equation}
Solving the above relations for $r_1$ and $m$ we get $S^2_X=\{(x,x,\phi) \in S^{2}_X \, | \, x\geq0\}$ and  $c=\left(\tau,\tau,\phi \right).$

\begin{lemma}\label{lemma1}  For each $x_0\in(0,\tau),$ the forward trajectory of $X$ passing through $(x_0,x_0,r_1-r_2)$ intersects the switching manifold $\Sigma$ transversally at a point called $\mu(x_0)=(u(x_0),u(x_0),v(x_0)).$ In other words, the saturation of $S^v_X$ through the forward flow of $X$ intersects $\Sigma$ transversally  in the curve $\{\mu(x_0):\,0<x_0<\tau\}.$
	Moreover, the following statements hold:
	\begin{itemize}
		\item[i)] for $x_0<\tau$ sufficiently close to $\tau$ we have
		\begin{equation*}
		\begin{array}{l}
		u(x_0)=\tau-2(x_0-\tau)+\CO(x_0-\tau)^2,\\
		v(x_0)=r_1-r_2+\CO(x_0-\tau)^2;
		\end{array}
		\end{equation*}
		
		\item[ii)] and given $x_0\in(0,\tau),$ for $r_2>0$ sufficiently small we have
		\begin{equation*}
		\begin{array}{l}
		u(x_0)=x_0+\CO(r_2),\\
		v(x_0)=r_1+\sqrt{2r_1T(x_0)(m-e q_1 x_0)}\sqrt{r_2}+\CO\left(r_2^{3/2}\right),
		\end{array}
		\end{equation*}
	\end{itemize}
	where $T(x_0)$ is the period,  for $r_2=0,$ of the solution $\left(x(t,x_0;r_2), z(t,x_0;r_2)\right).$
\end{lemma}

\begin{proof}
	Take $(x_0,x_0,\phi)\in S_x^v,$ such that $0<x_0<\tau.$ The parameter $r_2$ will play an important role in this proof, so we shall make it explicit, as follows. Let us consider $\psi(t,x_0;r_2)=(x(t,x_0;r_2),y(t,x_0;r_2),z(t,x_0;r_2))$ the solution of $X$ such that $\psi(0,x_0;r_2)=(x_0,x_0,\phi).$ Notice that
	\[
	\begin{array}{l}
	\dfrac{\p x}{\p t}(0,x_0;r_2)=r_2 x_0,\vspace{0.2cm}\\
	\dfrac{\p^2 x}{\p t^2}(0,x_0;r_2)=r_2^2x_0+e q_1 x_0 (\tau-x_0)\phi,\vspace{0.2cm}\\
	\dfrac{\p y}{\p t}(0,x_0;r_2)=r_2 x_0,\quad\text{and}\quad \dfrac{\p^2 y}{\p t^2}(0,x_0;r_2)=r_2^2 x_0.
	\end{array}
	\]
	Since
	\[
	\begin{array}{l}
	x(0,x_0;r_2)=y(0,x_0;r_2)=x_0,\vspace{0.2cm}\\ 
	\dfrac{\p x}{\p t}(0,x_0;r_2)=\dfrac{\p y}{\p t}(0,x_0;r_2),\quad \text{and}\vspace{0.2cm}\\
	\dfrac{\p^2 x}{\p t^2}(0,x_0;r_2)>\dfrac{\p^2 y}{\p t^2}(0,x_0;r_2),
	\end{array}
	\]
	we conclude that
	$y(t,x_0;r_2)<x(t,x_0;r_2)$ for $t>0$ sufficiently small. However, $x(t,x_0;r_2)$ is bounded and $y(t,x_0;r_2)$ is unbounded increasing, therefore there exists a first positive time $t_1(x_0;r_2)>0$ such that	
	\begin{eqnarray}\label{xt1}
	x(t_1(x_0;r_2),x_0;r_2)&=&y(t_1(x_0;r_2),x_0;r_2)\\\nonumber
	&=&x_0\exp(r_2 t_1(x_0;r_2)).
	\end{eqnarray}
	It means that the trajectory of $X$ passing tangentially by each $(x_0,x_0,\phi)\in S_X^v,$ for $0<x_0<\tau,$ transversally reaches the switching manifold $\Sigma$ at the point $\psi(t_1(x_0;r_2),x_0;r_2).$ Accordingly, for $0< x_0<\tau,$ we can define $\mu(x_0)=\psi(t_1(x_0;r_2),x_0;r_2),$
	\begin{equation}\label{uv}
	\begin{array}{l}
	u(x_0)=x(t_1(x_0;r_2),x_0;r_2)=x_0\exp(r_2 t_1(x_0;r_2)),\,\,\text{and}\\
	
	v(x_0)=z(t_1(x_0;r_2),x_0;r_2).
	\end{array}
	\end{equation}
	This concludes the proof of the the first part of the lemma.
	Now, let us prove that the Taylor series of $u(x_0)$ around $x_0=\tau$ reads
	\begin{equation}\label{taylor_u}
	u(x_0)=\tau-2(x_0-\tau)+\CO_2(x_0-\tau)^2.
	\end{equation}
	Notice that the difference $x(t,x_0;r_2)-x_0\exp(r_2 t)$ around $t=0$ reads
$$\scriptstyle	-\dfrac{\scriptstyle e q_1 \phi x_0(x_0-\tau)}{\scriptstyle 2}t^2-\dfrac{\scriptstyle e q_1 \phi x_0(r_2(4x_0-3\tau)+e q_1(x_0-\tau)^2)}{\scriptstyle 6}t^3+\CO_4(t).$$
	Therefore, the function
	\[
	\Delta(t,x_0):=\dfrac{x(t,x_0;r_2)-x_0\exp(r_2 t)}{t^2}
	\]
	is well defined and, around $t=0,$ reads
	\[
	\begin{array}{rl}
	\Delta(t,x_0)=&-\dfrac{e q_1 \phi x_0(x_0-\tau)}{2}-\vspace{0.2cm}\\
	&\dfrac{e q_1 \phi x_0(r_2(4x_0-3\tau)+e q_1(x_0-\tau)^2)}{6}t+\cdots
	\end{array}
	\]
	In order to apply the Implicit Function Theorem, we compute
	\[
	\Delta(0,\tau)=0,\quad\dfrac{\p \Delta}{\p t}(0,\tau)=-\dfrac{e q_1 r_2 \tau^2\phi}{6}\neq0,\]
	\[
	\text{and}\quad \dfrac{\p \Delta}{\p x_0}(0,\tau)=-\dfrac{e q_1\tau\phi}{2}.
	\]
	Therefore, we find a unique function $t_2(x_0)$ such that
	\begin{equation}\label{t2}
	t_2(\tau)=0, \quad t_2'(\tau)=-\dfrac{\dfrac{\p \Delta}{\p x_0}(0,\tau)}{\dfrac{\p \Delta}{\p t}(0,\tau)}=-\dfrac{3}{r_2\tau}.
	\end{equation}
	From the uniqueness of $t_2$ we conclude that, for $x_0$ sufficiently close to $\tau,$ $t_1(x_0;r_2)=t_2(x_0).$ Thus, using \eqref{t2}, $u(x_0)=x_0\exp(r_2 t_1(x_0))$ can be expanded around $x_0=\tau$ in order to get \eqref{taylor_u}.
	
	Finally, we shall prove that given $x_0^*\in(0,\tau)$ there exists a neighborhood $\U$ of $x_0^*$ and $r_2^*>0$ such that $u(x_0)<\tau$ and $v(x_0)>r_1$ for every $(x_0,r_2)\in \U\times(0,r_2^*].$ Indeed, consider the function
	\[
	\de(t,r_2)=x(t,x_0;r_2)-y(t,x_0;r_2)=x(t,x_0;r_2)-x_0 e^{r_2 t}.
	\]
	We know that $\big(x(t,x_0;r_2),z(t,x_0;r_2)\big)$ is periodic in the variable $t$  (see Fig. \ref{Fig porj y=0}). In fact, this is the solution of the Lotka-Volterra system \eqref{eq projecao} with initial condition $(x_0,r_1-r_2)$ and, therefore, satisfies \eqref{first}
	\begin{equation}\label{Fr2}
	F\big(x(t,x_0;r_2),z(t,x_0;r_2)\big)=F(x_0,r_1-r_2),
	\end{equation}
	for every $r_1>r_2>0,$ $0<x_0<\tau,$ and $t$ on its interval of definition. Thus, for $r_2=0,$ denote by $T(x_0)>0$ the period of the solution $\big(x(t,x_0;0),z(t,x_0;0)\big),$ that is, $\big(x(T(x_0),x_0;0),z(T(x_0),x_0;0)\big)=(x_0,r_1).$ Therefore, $\de(T(x_0),0)=0.$ We shall see that there is a saddle-node bifurcation occurring at $t=T(x_0)$ for the critical value of the parameter $r_2=0.$ Computing the derivative in the variable $r_2$ of \eqref{Fr2} at $t=T(x_0)$ and $r_2=0$ we get
	\[
	\dfrac{\p x}{\p r_2}(T(x_0),x_0,0)=0.
	\]
	Thus, we get
	\[
	\dfrac{\p \de}{\p t}(T(x_0),0)=0,\]
	\[\dfrac{\p^2 \de}{\p t^2}(T(x_0),0)=r_1(m-e q_1 x_0)x_0>0,
	\]
	and
	\begin{equation}\label{dr2}
	\dfrac{\p \de}{\p r_2}(T(x_0),0)=-x_0T(x_0)<0.
	\end{equation}
	This implies the existence of a saddle-node bifurcation. In order to conclude this proof, we shall  explicitly compute the solutions bifurcating from $t=T(x_0).$ From $\eqref{dr2},$ applying the Implicit Function Theorem, we get the existence of neighborhoods $I_1$ and $V_1$ of $T(x_0)$ and $0,$ respectively, and a unique differentiable function $\rho:I_1\rightarrow V_1$ such that $\de(t,\rho(t))=0$ for every $t\in I_1.$ Moreover,
	\[
	\rho(T(x_0))=\rho'(T(x_0))=0\]
	\[\text{ and } \rho''(T(x_0))=\dfrac{r_1(m-e q_1 x_0)}{2T(x_0)}.
	\]
	Notice that we are taking
	\begin{equation}\label{r2}
	r_2=\rho(t)=\dfrac{r_1(m-e q_1 x_0)}{2T(x_0)}(t-T(x_0))^2+\CO(t-T(x_0))^3.
	\end{equation}
	Proceeding with the change $s=(t-T(x_0))^2,$ equation \eqref{r2} is equivalent to
	\[
	r_2=\dfrac{r_1(m-e q_1 x_0)}{2T(x_0)}s+\CO(|s|^{3/2}).
	\]
	It is easy to see that the above equation can be inverted using the Inverse Function Theorem. Thus, we get the existence of neighborhoods $U_2$ and $I_2$ of $0,$ and a unique differentiable function $\sigma: U_2\rightarrow I_2$ such that
	\[
	s=\sigma(r_2),\quad \sigma(0)=0,\quad \sigma'(0)=\dfrac{2T(x_0)}{r_1(m-e q_1 x_0)}>0.
	\]
	Going back through the change $s=(t-T(x_0))^2$ we get two distinct positive times $t=T(x_0)\pm\sqrt{\sigma(r_2)}$ bifurcating from $t=T(x_0).$ Since $ t_1(x_0;r_2)$ is the first return time we conclude that
	\begin{eqnarray*}
		t_1(x_0;r_2)&=&T(x_0)-\sqrt{\sigma(r_2)}\\
		&=&T(x_0)+\sqrt{\dfrac{2T(x_0)}{r_1(m-e q_1 x_0)}}\sqrt{r_2}+\CO(r_2^{3/2}).
	\end{eqnarray*}
	Finally, from \eqref{uv} we compute
	\begin{equation}\label{uvr2small}
	\begin{array}{l}
	u(x_0)=x_0+\CO(r_2),\\
	v(x_0)=r_1+\sqrt{2r_1T(x_0)(m-e q_1 x_0)}\sqrt{r_2}+\CO\left(r_2^{3/2}\right).
	\end{array}
	\end{equation}
	This concludes the proof. See Fig. \ref{saturationfig}.
\end{proof}

\begin{lemma}\label{lemma1_2}
	There exist $a,b,c,$ and $d,$ with $0<a<b<\tau$ and $0<c<d,$ such that $0<u(x_0)<\tau$ and $v(x_0)>r_1$ for every $(x_0,r_2)\in[a,b]\times[c,d].$  Moreover, for $r_2\in[c,d],$ $\mu(x_0)$ is differentiable on $[a,b]$ and $u'(x_0)^2+v'(x_0)^2\neq0$ for every $(x_0,r_2)\in[a,b]\times[c,d].$
\end{lemma}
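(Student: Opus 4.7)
The plan is to propagate the pointwise asymptotic behavior in part (ii) of Lemma \ref{lemma1} to a uniform statement on a compact rectangle, by re-running the implicit function theorem arguments of the proof of Lemma \ref{lemma1} with $x_0$ treated as an additional parameter.

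First I would fix some $x_0^*\in(0,\tau)$. Since $x_0^*<\tau=m/(eq_1)$, the coefficient $\sqrt{2r_1T(x_0^*)(m-eq_1x_0^*)}$ appearing in the expansion of $v(x_0^*)$ is strictly positive, and $u(x_0^*)=x_0^*+\CO(r_2)$ with $x_0^*<\tau$. So for any sufficiently small $r_2>0$ the pointwise estimates $0<u(x_0^*)<\tau$ and $v(x_0^*)>r_1$ hold.

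To get this uniformly on a closed interval $[a,b]\subset(0,\tau)$ containing $x_0^*$, I would revisit the two implicit function theorem steps in the proof of Lemma \ref{lemma1}(ii), this time keeping $x_0$ as a parameter. The function $\de(t,x_0,r_2)=x(t,x_0;r_2)-x_0e^{r_2t}$ is jointly smooth, and the non-degeneracy conditions
\begin{equation*}
\dfrac{\p\de}{\p t}(T(x_0),x_0,0)=0,\quad \dfrac{\p^2\de}{\p t^2}(T(x_0),x_0,0)=r_1(m-eq_1x_0)x_0>0,\quad \dfrac{\p\de}{\p r_2}(T(x_0),x_0,0)=-x_0T(x_0)<0,
\end{equation*}
hold uniformly for $x_0\in[a,b]$. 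Hence $\rho(t,x_0)$, $\sigma(r_2,x_0)$, and consequently $t_1(x_0;r_2)$ extend to jointly smooth functions on $[a,b]\times[0,r_2^*]$ for some $r_2^*>0$, and the expansions
\begin{equation*}
u(x_0)=x_0+\CO(r_2),\qquad v(x_0)=r_1+\sqrt{2r_1T(x_0)(m-eq_1x_0)}\,\sqrt{r_2}+\CO(r_2^{3/2})
\end{equation*}
hold uniformly in $x_0\in[a,b]$. By continuity, I can then choose $0<c<d\le r_2^*$ small enough so that $0<u(x_0)<\tau$ and $v(x_0)>r_1$ throughout $[a,b]\times[c,d]$.

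For the remaining assertion, differentiability of $\mu$ on $[a,b]$ follows at once from the smoothness of $t_1(x_0;r_2)$ established above. To see that $u'(x_0)^2+v'(x_0)^2\neq 0$, I would differentiate the uniform expansion $u(x_0)=x_0+\CO(r_2)$ in $x_0$, noting that the remainder is jointly smooth in $(x_0,r_2)$ with $x_0$-derivatives of order $\CO(r_2)$ on $[a,b]$, to obtain $u'(x_0)=1+\CO(r_2)$, which is strictly positive after shrinking $d$ if necessary. The main obstacle is precisely the uniformity upgrade: checking that all implicit function theorem steps in the proof of Lemma \ref{lemma1} remain valid with $x_0$ as an additional parameter ranging in a compact subinterval of $(0,\tau)$, and that the remainders inherit the required smoothness. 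Alternatively, one can bypass the analytic estimate for $(u',v')$ by arguing geometrically: the Poincaré map sending $(x_0,x_0,\phi)\in S^v_X$ to $\mu(x_0)\in\Sigma$ is a local diffeomorphism along trajectories of $X$, because $X(x_0,x_0,\phi)=(r_2x_0,r_2x_0,(eq_1x_0-m)\phi)$ has non-zero $z$-component for $x_0<\tau$ and hence is transverse to $S^v_X$, so the smooth curve $S^v_X$ maps to an immersed curve in $\Sigma$.
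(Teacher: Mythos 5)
Your proof is correct but takes a partly different route from the paper's. For the existence of the rectangle $[a,b]\times[c,d]$, you argue directly from \eqref{uvr2small} at a fixed interior point $x_0^*$, using that the $\sqrt{r_2}$-coefficient in $v$ is strictly positive and $u(x_0^*) = x_0^* + \CO(r_2) < \tau$, then upgrade to a compact rectangle by continuity; the paper instead first locates a crossing value $x_1^*$ with $u(x_1^*)=\tau$ by combining the expansion \eqref{taylor_u} near $x_0=\tau$ with the small-$r_2$ expansion, then moves to the left of $x_1^*$ and appeals to the center $(\tau,r_1)$ of the first integral to get $v>r_1$ — your way is more direct. For the non-degeneracy $u'(x_0)^2+v'(x_0)^2\neq 0$, the paper differentiates the conserved-quantity relation $F(u(x_0),v(x_0))=F(x_0,r_1-r_2)$ and notes that $u'=v'=0$ would force $F_x(x_0,r_1-r_2)=0$, i.e. $x_0=\tau$, which is excluded. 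Your expansion $u'(x_0)=1+\CO(r_2)$ is valid, but the remainder control is genuinely delicate: since $t_1=T(x_0)-\sqrt{\sigma(r_2,x_0)}$ carries a $\sqrt{r_2}$, one must verify that $\partial_{x_0}t_1=T'(x_0)+\CO(\sqrt{r_2})$ stays bounded before concluding $r_2\,\partial_{x_0}t_1=\CO(r_2)$; also, $t_1$ extends continuously but not smoothly to $r_2=0$, which is harmless only because $c>0$. Your alternative geometric argument is cleaner and arguably more conceptual than both: since $X(x_0,x_0,\phi)$ has nonzero $z$-component it is not tangent to the curve $S^v_X$, so the flow-saturation of $S^v_X$ is an immersed surface meeting $\Sigma$ transversally along an immersed curve, hence $\mu'(x_0)=(u',u',v')\neq 0$ — though the condition should be phrased as $X$ not being tangent to the one-dimensional set $S^v_X$ rather than ``transverse to $S^v_X$''. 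Finally, your IFT-based smoothness of $t_1$ matches the paper's differentiability argument, which applies the implicit function theorem at the first-return time using $\partial_t\Gamma(t_1,\ov{x_0},\ov{r_2})=(r_1-v(\ov{x_0}))u(\ov{x_0})\neq 0$.
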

\begin{proof}
	From \eqref{taylor_u} we have that $u(x_0)>\tau$ for $x_0$ sufficiently close to $\tau,$ and from \eqref{uvr2small} we have that for a fixed $x_0\in(0,\tau)$ there exists $r_2^*>0$ such that $u(x_0)<\tau$ for every $r_2\in(0,r_2^*].$ Therefore, for $r_2={r_2}_*<r_2^*$ there exists $x_1^*\in(0,\tau)$ such that $u(x_1^*)=\tau$ and $u(x_0)<\tau$ for $x_0<x_1^*$ sufficiently close to $x_1^*.$ Moreover, $v(x_1^*)>r_1$ and, consequently, $v(x_0)>r_1$ for $x_0<x_1^*$ sufficiently close to $x_1^*$  because $(\tau,r_1)$ is a critical point for the first integral \eqref{first}. Hence, take ${x_1}_*<x_1^*$ such that $v({x_1}_*)>r_1$ and $u({x_1}_*)<\tau.$ Thus, from the continuous dependence of the solutions on the initial conditions and parameters, we get the existence of $a,b,c,$ and $d,$ with $0<a<{x_1}_*<b<\tau$ and $0<c<{r_2}_*<d$ such that $0<u(x_0)<\tau$ and $v(x_0)>r_1$ for every $(x_0,r_2)\in[a,b]\times[c,d].$
	Furthermore, in order to get the differentiability of $\mu,$ define
	\[
	\G(t,x_0,r_2)=x(t,x_0;r_2)-x_0\exp(r_2 t).
	\]
	From the proof of Lemma \ref{lemma1}, for each $(\ov{x_0},\ov{r_2})\in[a,b]\times[c,d]$ we get  the existence of $t_1(\ov{x_0};\ov{r_2})>0$ such that $\G(t_1(\ov{x_0};\ov{r_2}),\ov{x_0},\ov{r_2})=0.$ Since
	\[
	\dfrac{\p \Gamma}{\p t}(t_1(\ov{x_0};\ov{r_2}),\ov{x_0},\ov{r_2})=(r_1-r_2-v(\ov{x_0}))u(\ov{x_0})\neq0,
	\]
 by the Implicit Function Theorem, there exists a unique differentiable function $t_2(x_0,r_2),$ defined in a neighborhood of $(\ov{x_0},\ov{r_2}),$ such that $t_2(\ov{x_0},\ov{r_2})=t_1(\ov{x_0};\ov{r_2})$ and $\G(t_2(x_0,r_2),x_0,r_2)=0$ for every $(x_0,r_2)$ in this neighborhood. According to the uniqueness property it follows that $t_1=t_2,$ which implies the differentiability of $t_1$ at $(\ov{x_0};\ov{r_2})$ and, consequently, the differentiability of $\mu$ at $x_0=\ov{x_0}$ for $r_2=\ov{r_2}.$ Since $(\ov{x_0},\ov{r_2})$ was taken arbitrarily in the compact set $[a,b]\times[c,d],$ we conclude the differentiability of $\mu$ for every $(x_0,r_2)\in[a,b]\times[c,d].$	
	Finally, notice that $F(u(x_0),v(x_0))=F(x_0,r_1-r_2).$ Assuming that $u'(x_0)=v'(x_0)=0$ and computing the derivative of the last identity in the variable $x_0$ we get that $x_0=\tau.$ Hence, we conclude that $u'(x_0)^2+v'(x_0)^2\neq0$ for every $(x_0,r_2)\in[a,b]\times[c,d].$
\end{proof}

\begin{figure}[h]
	\begin{center}
		\begin{overpic}
			[width=7cm]
			{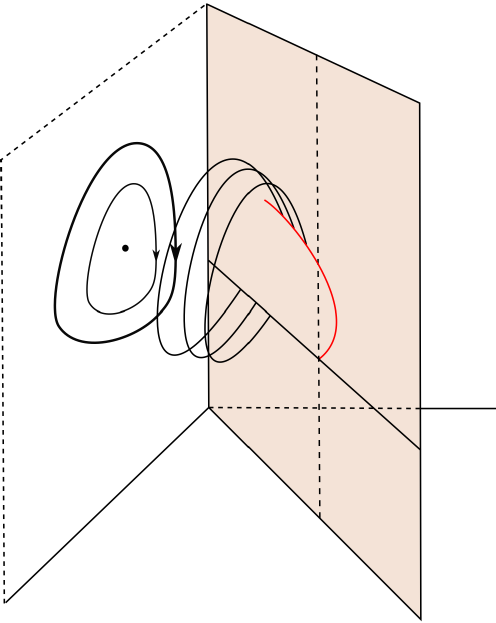}
			\put(-2,0){$x$}	\put(80,34){$y$}	\put(32,100){$z$}	\put(63,6){$\Sigma$} \put(61,25){$S^{2}_{X}$}	\put(48,40){$\tau$}	\put(55,50){$\mu$}	
		\end{overpic}
	\end{center}
	\caption{The curve $\mu$ represents the intersection between $\Sigma$ and the saturation of the curve of visible fold point of $X,$ $S^v_X,$ through the forward flow of $X.$}\label{saturationfig}
\end{figure}

\subsection{Dynamics of the sliding vector field}

In this section, we are going to look more closely at the sliding vector field. Firstly, consider Filippov system \eqref{sistema inicial sem beta} in variables $(x,w,z)=(x,x-y,z),$ where $(x,w,z)\in\R_{\geq0}\times\R\times \R_{\geq0}.$ Thus, $(\dot x,\dot w,\dot z)^T=Z(x,w,z),$ where
\begin{equation}\label{ts2}
Z(x,w,z)=\left\{\begin{array}{ll}
\left(
\begin{array}{c}
(r_1 -  z)x\\
r_2 w+x(r_1-r_2-z)\\
(e q_1 x -m)z
\end{array}
\right)&\,\, w>0,\vspace{0.2cm}\\
\left(
\begin{array}{c}
r_1 x\\
r_1 x-(x-w)\left(r_2-\dfrac{\beta_2}{\beta_1}z\right)\\
\left(\dfrac{e q_2 }{a_q}(x-w) -m\right)z
\end{array}
\right)&\,\, w<0.
\end{array}\right.
\end{equation}
In the new variables, the switching manifold is now provided by $\{(x,0,z):\,x\geq 0,\,z\geq0\}$ and the associated sliding vector field  $Z^s$ reads:
\begin{equation}\label{sliding}
\left(
\begin{array}{c}
\frac{\beta_1 r_2+\beta_2 r_1}{\beta_1+\beta_2}\, x-\frac{\beta_2}{\beta_1+\beta_2}\,x\,z\vspace{0.2cm}\\
\frac{e(a_q q_1-q_2)(r_1-r_2)\beta_1}{a_q(\beta_1+\beta_2)}\,x-m\, z+\frac{e(\beta_1 q_2+a_q \beta_2 q_1)}{a_q(\beta_1+\beta_2)}\, x\, z
\end{array}\right).
\end{equation}
The vector field \eqref{sliding} admits two equilibria, namely $(0,0)$ and
\begin{equation}\label{par2}
\big(x_c,z_c\big)=\left(\dfrac{a_q m(\beta_1 r_2+\beta_2 r_1)}{e(\beta_1 q_2 r_2+a_q \beta_2 q_1 r_1)}\,,\,r_1+\dfrac{\beta_1}{\beta_2}r_2\right)
\end{equation}
Notice that, from the original condition $\eta\in\CM,$
\[
0<x_c<\tau \text{ and } z_c>r_1=\phi-r_2.
\]
The next lemma is the main result of this subsection.
\begin{lemma}\label{lemma2}
	Let $\eta\in\CM$ and assume that
	\begin{equation}\label{condm}
	m<\dfrac{4(\beta_1+\beta_2)(r_2\beta_1+r_1\beta_2)(q_2r_2\beta_1+a_q q_1r_1\beta_2)^2}{(q_2-a_q q_1)^2(r_1-r_2)^2\beta_1^2\beta_2^2}.
	\end{equation}
	As such, the following statements hold:
	\begin{itemize}
		\item[(i)] the equilibrium $(x_c,z_c)$ is a repulsive focus;
		
		\item[(ii)] there exists $x^*\in [0,\tau)$ such that the backward orbit of $Z^s$ of any point of the straight segment $L=\{(x,\phi):\,x^*<x\leq\tau\}\subset S_X^v$ is contained in $\Sigma^s$ and converges asymptotically to the equilibrium  $(x_c,z_c).$
	\end{itemize}
\end{lemma}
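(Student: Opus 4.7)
My plan is to prove part (i) by a direct Jacobian computation at $(x_c,z_c)$. Since $z_c=(\beta_1r_2+\beta_2r_1)/\beta_2$, the entry $\p_xZ^s_1(x_c,z_c)$ vanishes, so the Jacobian's trace reduces to $\p_zZ^s_2$ and its determinant to $-\p_zZ^s_1\cdot\p_xZ^s_2$. Routine simplification yields
\begin{equation*}
\mathrm{tr}=\frac{m\beta_1\beta_2(q_2-a_qq_1)(r_1-r_2)}{(\beta_1+\beta_2)(\beta_1q_2r_2+a_q\beta_2q_1r_1)},\qquad\det=\frac{m(\beta_1r_2+\beta_2r_1)}{\beta_1+\beta_2},
\end{equation*}
both positive (positivity of $\mathrm{tr}$ requires $b_q=q_2-a_qq_1>0$, which is generic in $\CM$ and is in fact necessary for (i)). The inequality $\mathrm{tr}^2<4\det$ is precisely the algebraic rearrangement of \eqref{condm}, so the eigenvalues are complex conjugate with positive real part, proving (i).

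For (ii) the approach is Poincar\'e--Bendixson applied to the backward flow of $Z^s$ restricted to $\Sigma^s$. Three preparatory facts are needed. First, $(x_c,z_c)$ is the unique equilibrium of $Z^s$ in $\ov{\Sigma^s}$, since the other equilibrium $(0,0)$ lies on the $Z^s$-invariant axis $\{x=0\}$, which is disjoint from $\Sigma^s$. Second, Bendixson--Dulac with $h(x,z)=1/(xz)$ gives
\begin{equation*}
\p_x(hZ^s_1)+\p_z(hZ^s_2)=\frac{e\beta_1(q_2-a_qq_1)(r_1-r_2)}{a_q(\beta_1+\beta_2)z^2}>0
\end{equation*}
on the whole open quadrant $\{x>0,z>0\}\supset\Sigma^s$, ruling out any periodic orbit or polycycle of $Z^s$ inside $\Sigma^s$. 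Third, on the fold segment one computes $Z^s(x_0,\phi)=(r_2x_0,\phi(eq_1x_0-m))$, whose $z$-component is strictly negative for $x_0\in(0,\tau)$ and vanishes at $x_0=\tau$ with $\ddot z(0)=eq_1\phi r_2\tau>0$; in either case the backward orbit from $(x_0,\phi)\in L$ immediately enters the open sliding region $\Sigma^s$.

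The principal remaining obstacle is to trap every such backward orbit in a compact subset of $\Sigma^s$, because Poincar\'e--Bendixson combined with the previous two facts will then force the $\alpha$-limit to be $(x_c,z_c)$. The global geometry of $Z^s$ is favorable: its $x$-nullcline is the horizontal line $z=z_c$, its $z$-nullcline is a single strictly decreasing curve $x=x(z)$ in $\Sigma^s$ with a horizontal asymptote $x_\infty<x_c$ as $z\to\infty$, and the flow rotates counterclockwise around $(x_c,z_c)$. I plan to assemble a compact $\Omega\subset\ov{\Sigma^s}$ whose lower boundary is the fold segment (not re-crossable backward, by the sign of $Z^s_2$), whose left shield is the $Z^s$-invariant axis $\{x=0\}$ (unreachable from $\Sigma^s$), and whose outer boundary is built from two transverse arcs chosen via the nullcline structure so that $Z^s$ points outward on them. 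The value $x^*$ is then fixed by requiring the initial backward arc from each $(x_0,\phi)$ with $x_0\in(x^*,\tau]$ to enter $\Omega$. The delicate point is verifying the outward-pointing property of the outer arcs, where the sharpest use of the nullcline geometry is required; once $\Omega$ is in hand, Poincar\'e--Bendixson in backward time delivers the convergence claim and completes (ii).
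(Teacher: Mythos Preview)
Your treatment of (i) is correct and coincides with the paper's: both compute the Jacobian at $(x_c,z_c)$ and identify \eqref{condm} as the condition $\mathrm{tr}^2<4\det$; your observation that $b_q=q_2-a_qq_1>0$ is needed for the trace to be strictly positive is accurate (the paper uses this implicitly when asserting $\alpha>0$).

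For (ii) the two arguments share the Bendixson--Dulac step with weight $1/(xz)$, but diverge afterward. The paper does \emph{not} assemble a trapping region from nullcline arcs. Instead it writes $Z^s=Z_{LV}+\big(0,\,\text{linear in }x\big)$, takes the Lotka--Volterra first integral $H$ of $Z_{LV}$, and checks that the rescaled function $H(ax,z)$ (with $a>0$ chosen so that its minimum sits at $(x_c,z_c)$) satisfies $\langle\nabla H(ax,z),Z^s(x,z)\rangle>0$ for all $x,z>0$ with $z\neq z_c$. Since the level sets of $H(ax,z)$ are closed curves filling the open quadrant, this single inequality makes every sublevel set backward-invariant and compact, so Poincar\'e--Bendixson (plus no limit cycles, unique equilibrium) gives global backward convergence to $(x_c,z_c)$ in one stroke. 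The paper then fixes $x^*$ concretely as the first forward return of the $Z^s$-orbit through $(\tau,\phi)$ to the line $\{z=\phi\}$ (or $x^*=0$ if no return), which guarantees that backward orbits starting on $L=\{(x,\phi):x^*<x\le\tau\}$ never leave $\Sigma^s$.

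Your nullcline-based plan is not wrong in spirit, but the step you yourself flag as ``delicate'' --- producing outer arcs on which $Z^s$ points outward so that backward orbits cannot escape to large $x$ or large $z$ --- is precisely the global estimate that the Lyapunov function $H(ax,z)$ supplies in one line. Without it you have not yet ruled out backward unboundedness, and piecing together such arcs from the nullclines alone (a horizontal line and a hyperbola branch) is not straightforward since neither is transverse to $Z^s$. The paper's route is both shorter and complete; if you want to finish along your lines, the cleanest fix is to adopt the same Lyapunov function rather than to hunt for ad hoc outer arcs.
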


\begin{remark}\label{rem1}
	If we consider the following change in parameters
	\[
	e=\dfrac{a_q m z_c}{(a_q q_1 r_1+q_2(z_c-r_1))x_c}\text{ and } \beta_2=\dfrac{r_2\beta_1}{z_c-r_1},
	\]
	then the inequality \eqref{condm} becomes
	\[
	m<\dfrac{4r_2 z_c(z_c-\phi)(a_q q_1 r_1+q_2(z_c-r_1))^2}{\phi^2(q_2-a_q q_1)^2(z_c-r_1) ^2}.
	\]
\end{remark}

\begin{proof}

	Denote by $\alpha\pm i b$ the eigenvalues of the jacobian matrix  of the vector field $Z^s$ at $(x_c,z_c).$ It is clear that \eqref{condm} implies that $b\neq0.$ In this case,
	\[
	\alpha=\dfrac{m(q_2-a_q q_1)(r_1-r_2)\beta_1\beta_2}{2(\beta_1+\beta_2)(a_q q_1 r_1 \beta_2+q_2 r_2 \beta_1)}>0.
	\]
	Hence, $(x_c,z_c)$ is a repulsive focus. This concludes the proof of statement $(i).$ In order to prove statement $(ii),$ we first claim that the sliding vector field denoted by $Z^s(x,z)=\left(Z^s_1(x,z),Z^s_2(x,z)\right)$ and provided by equation \eqref{sliding} does not admit limit cycles contained in the region $\{(x,z)\in\mathbb{R}^2:x>0,z>0\}.$ Indeed,
	\[
	\frac{\p }{\p x}\left(\frac{Z^s_1(x,z)}{x\, z}\right)+\frac{\p }{\p z}\left(\frac{Z^s_2(x,z)}{x\, z}\right)=\frac{e(q_2-a_q q_1)(r_1-r_2)\beta_1}{a_q(\beta_1+\beta_2)z^2}>0,
	\]
	for $x,z>0.$ Since the function $(x,z)\mapsto\dfrac{1}{xz}$ is $C^1$ in the region $\{(x,z)\in\mathbb{R}^2:x>0,z>0\},$ the claim follows by the Bendixson-Dulac criterion (see \cite{DumortLibreArtesQualitbook}).
	We may observe that the sliding vector field writes
	\[
	Z^s(x,z)=Z_{LV}(x,z)+\left(0,\dfrac{e(a_q q_1-q_2)(r_1-r_2)\beta_1}{a_q(\beta_1+\beta_2)}\,x\right),
	\]
	where $Z_{LV}$ is a Lotka-Volterra vector field. We know that $Z_{LV}$ admits the following first integral
	\[
	\begin{array}{rllll}
	H(x,z)&=&-m-\dfrac{r_2\beta_1+r_1\beta_2}{\beta_1+\beta_2}+\dfrac{e(a_q q_1 \beta_2+q_2\beta_1)}{a_q(\beta_1+\beta_2)}x +\vspace{0.2cm}\\
	&+&\dfrac{\beta_2}{\beta_1+\beta_2}z-m\log\left(\dfrac{e(a_q q_1 \beta_2+q_2\beta_1)}{a_qm(\beta_1+\beta_2)}\right)-\vspace{0.2cm}\\
	&-&\dfrac{r_2\beta_1+r_1\beta_2}{\beta_1+\beta_2}\log\left(\dfrac{\beta_2}{r_2\beta_1+r_1\beta_2}z\right),
	\end{array}
	\]
	that is, $\langle\nabla H(x,z),Z_{LV}(x,z)\rangle=0.$  Now, let
	\[
	a=\dfrac{(\beta_1+\beta_2)(q_2 r_2 \beta_1+a_q q_1 r_1 \beta_2)}{(q_2\beta_1+a_q q_1\beta_2)(r_2\beta_1+r_1\beta_2)}>0.
	\]
	Since $	\langle\nabla H(a x,z),Z^s(x,y)\rangle$ is equal to
	\[
\dfrac{(q_2-a_q q_1)(r_1-r_2)\beta_1(r_2\beta_1+(r_1-z)\beta_2)^2}{r_2\beta_1+r_1\beta_2}>0,
	\]
	for every $x,z>0$ and $z\neq z_c,$ we get that the level curves of $H(a x,z)$ are negatively invariant. Since $Z^s$ has no limit cycles, focus $(x_c,z_c)$ must attract the orbits of every point in the positive quadrant backwards in time.
	Finally, consider $\f(t)$ the trajectory of $Z^s$ passing through $(\phi,\tau).$ If there exists $t_s>0$ such that $\f(t_s)\in S_X^2,$ then take $x^*=\f(t_s).$ Otherwise take $x^*=0.$ It is easy to see that, in this case, $x^*<\tau.$  Indeed, $x^*\neq\tau,$ otherwise there would exist a periodic solution passing through $(\tau,r_1-r_2),$ and $\pi_2 Z^s(x,r_1-r_2)=(r_1-r_2)(e q_1 x-m)>0$ for every $x>\tau.$  Hence, the proof of statement $(ii)$ follows.
\end{proof}

We are now able to prove the main theorem of this paper.

\subsection{Proof of Theorem \ref{maintheorem}} \label{proofthm2}
Let us guarantee the existence of a \textit{Sliding  Shilnikov Connection} according to Definition \ref{defshil}. Lemma \ref{lemma1} ensures that the saturation of $S^v_X$ through the forward flow of $X$ transversally intersects the switching manifold $\Sigma$ in a curve $\mu(x_0)=(u(x_0),u(x_0),v(x_0)),$ with $0<x_0<\tau.$ Moreover, from Lemma \ref{lemma1_2} there exist $a,b,c,$ and $d,$ with $0<a<b<\tau$ and $0<c<d,$ such that $0<u(x_0)<\tau$ and $v(x_0)>r_1$ for every $(x_0, r_2)\in[a,b]\times[c,d].$ Accordingly, for some $x_0\in[a,b],$ take $(x_c,z_c)=(u(x_0),v(x_0)).$ Assume $c<r_2<d$ and
\[
m<\dfrac{4r_2 v(x_0)(v(x_0)-\phi)(a_q q_1 r_1+q_2(v(x_0)-r_1))^2}{\phi^2(q_2-a_q q_1)^2(v(x_0)-r_1) ^2}.
\]
From Lemma \ref{lemma2} and Remark \ref{rem1} we have that $(x_c,z_c)\in \Sigma^s\subset\Sigma$ is a repulsive focus of $Z^s$ and there exists $x^*\in[0,\tau)$ such that the backward orbit of any point in the straight segment $L=\{(x,\phi):\,x^*<x\leq\tau\}\subset S_X^v$ is contained in $\Sigma^s$ and converges asymptotically to $p.$
If $x^*=0,$ then from Lemma \ref{lemma2} we have characterized a sliding Shilnikov connection through the fold-regular point $(x_0,x_0,\phi)$ and the pseudo-equilibrium provided by $(u(x_0),u(x_0),v(x_0)).$
Now, assume that $x^*\neq 0.$ It remains to prove that $(x_c,z_c)=(u(x_0),v(x_0))$ implies $x^*<x_0.$ Notice that, in this case, points $(x_c,z_c)$ and $(x_0,\phi)$ lie in the same level set of $F.$ Recall that $F$ is the first integral \eqref{first} of the Lotka-Volterra system \eqref{eq projecao}. Denote $C=F^{-1}(F(x_c,z_c)).$  Firstly, we shall study the behavior of $Z^s$ on $C,$ which is equivalent to analyzing the sign of the product $\langle\nabla F(x,z), Z^s(x,z)\rangle,$ that is, the sign of
\[
\dfrac{(r_1 - r_2 - z) (e q_2 x (r_1 - z) + a_q (-e q_1 r_1 x + m z)\beta_1}{a_q z (\beta_1 + \beta_2)},
\]
for $(x,z)\in C.$ Since $a_q z (\beta_1 + \beta_2)>0$ and $r_1 - r_2 - z\leq0$ for $z\geq r_1-r_2,$ it is sufficient to analyze the sign of $e q_2 x (r_1 - z) + a_q (-e q_1 r_1 x + m z)$ on $C.$
Notice that equation
\begin{equation}\label{hiperbola}e q_2 x (r_1 - z) + a_q (-e q_1 r_1 x + m z)=0\end{equation}
describes a hyperbole containing points $(0,0)$ and $(x_c,z_c).$ Indeed, \eqref{hiperbola} is a quadratic equation of the form $\mathcal{A}x^2+\mathcal{B}xz+\mathcal{C}z^2+\mathcal{D}x+\mathcal{E}z+\mathcal{F}=0,$ where $\mathcal{B}=-e q_2,$ $\mathcal{D}=r_1 e(q_2-a_q q_1),$ $\mathcal{E}=a_q m,$ $\mathcal{A}=\mathcal{C}=\mathcal{F}=0,$ and so $\mathcal{B}^2-4 \mathcal{A}\mathcal{C}=\mathcal{B}^2>0.$ Due to convexity, each connected component of the hyperbola \eqref{hiperbola} intersects $C$ at most in two points. Solving \eqref{hiperbola}, we get
\[
z=z_h(x)=\dfrac{e(q_2-a_q q_1)r_1 x}{e q_2 x-a_q m}.
\]
Define
\[
F_c(x)=F(x,z_h(x))-F(x_c,z_c).
\]
Notice that $F_c(x)>0,$ $F_c(x)=0,$ and  $F_c(x)<0$ imply $(x,z_h(x))\in\textrm{ext}(C),$  $(x,z_h(x))\in C,$ and $(x,z_h(x))\in\textrm{int}(C),$ respectively. Clearly, $F_c(x_c)=0.$ Moreover,
\[
F'_c(x_c)=-\dfrac{e q_1 r_1(\tau-x_c)^2+\tau(r_1-x_c)^2}{r_1(\tau-x_c)x_c}<0.
\]
This implies that $F_c(x)>0$ for every $x\in(0,x_c).$ Consequently, $e q_2 x (r_1 - z) + a_q (-e q_1 r_1 x + m z)<0$ and $\langle\nabla F(x,z), Z^s(x,z)\rangle>0$ for every $(x,z)\in C$ such that $x\in(0,x_c).$ It means that the vector field $Z^s$ points outwards $C$ provided that $(x,z)\in C$ and  $x\in(0,x_c).$

Finally, let $\f(t)$ be the trajectory of $Z^s$ passing through $(\phi,\tau).$ Since $x^*=\pi_x \f(t_s)$ for some $t_s>0$ and $\f(t_s)\in S_X^v,$ there exists $t_s'\in(0,t_s)$ such that $\pi_x \f(t'_s)= x_c.$ Moreover, $\f(t'_s)\in\textrm{ext}(C)$ because $(x_c,z_c)$ is a repulsive focus lying on $C.$ From the previous comments, the trajectory $\f(t)$ remains in the exterior of $C$ for every $t\in[t_s',t_s].$ Hence, $\f(t_s)\in\textrm{ext}(C)$ implying that $x^*<x_0.$ Then, applying Lemma \ref{lemma2} we have characterized a sliding Shilnikov connection through the fold-regular point $(x_0,x_0,\phi)$ and the pseudo-equilibrium point.

Now, define $\widetilde\CN$ as  the set of parameter vectors $\eta=(r_1,r_2,a_q,q_1,q_2,$ $\beta_1,\beta_2,$ $e,m)\in\CM$ satisfying the inequalities
\begin{equation}\label{ine}
\begin{array}{l}
a\leq x_0\leq b,\,\, c\leq r_2\leq d,\,\,  \text{and} \,\, m<M(x_0,r_2), \,\, \text{where}\vspace{0.3cm}\\
M(x_0,r_2)=\dfrac{4r_2 v(x_0)(v(x_0)-\phi)(a_q q_1 r_1+q_2(v(x_0)-r_1))^2}{\phi^2(q_2-a_q q_1)^2(v(x_0)-r_1) ^2},
\end{array}
\end{equation}
and the identities
\begin{equation}\label{ident}
\begin{array}{rl}
e=&E(x_0):=\dfrac{a_q m v(x_0)}{(a_q q_1 r_1+q_2(v(x_0)-r_1))u(x_0)}\quad \text{and}\vspace{0.2cm}\\
\beta_2=&B(x_0):=\dfrac{r_2\beta_1}{v(x_0)-r_1}.
\end{array}
\end{equation}
The identities \eqref{ident} come from assuming the equality $(x_c,z_c)=(u(x_0),v(x_0))$ (see Remark \ref{rem1}). From the construction above, the Filippov system \eqref{sistema inicial} possesses a sliding Shilnikov orbit whenever $\eta\in\widetilde\CN.$

We shall identify a codimension one submanifold $\CN\subset\widetilde\CN$  of  $\CM$ as follows. Firstly, notice how  $M$ is a positive continuous function and, therefore, assumes a minimum $M_0>0$ on the compact set $[a,b]\times[c,d].$ Moreover, $E'(x_0)^2+B'(x_0)^2\neq0$ for every $x_0\in(0,\tau).$ Indeed, it is easy to see that $E'(x_0)^2+B'(x_0)^2=0$ if, and only if, $u'(x_0)^2+v'(x_0)^2=0,$ which would contradict Lemma \ref{lemma1_2}. Without loss of generality, assume that for some $x_0\in(a,b),$ $B'(x_0)\neq0.$ According to the inverse function theorem, function $B$ can be locally inverted, that is, there exists a neighborhood $\CB$ of $B(x_0)$ and a unique function $B^{-1}:\CB\rightarrow(a,b)$ such that $ B\circ B^{-1}(\beta_2)=\beta_2$ whenever $\beta_2\in\CB.$ Hence, taking
\begin{equation*}
c\leq r_2\leq d,\,\, m\leq M_0,\,\, \beta_2\in\CB\,\, \text{and}\,\, e=E\circ B^{-1}(\beta_2),
\end{equation*}
the inequalities \eqref{ine} and the identities \eqref{ident} are fulfilled. Thus, for $\eta=(r_1,r_2,a_q,q_1,$ $q_2,\beta_1,\beta_2,e,m)\in\CM,$ define $\CN\subset\widetilde{\CN}$ provided by
\[	
\CN=\{\eta\in\CM:\, c< r_2< d,\, m< M_0,\, \beta_2\in\CB\,\,\text{and}\,\, e=E\circ B^{-1}(\beta_2)\}.
\]
Notice that $\CN$ is a graph defined in a open domain. Therefore, $\CN$ is a codimension one submanifold of $\CM.$
Finally, the existence of the neighborhood $\CU\subset \CM$ of $\CN,$ satisfying that the Filippov system \eqref{sistema inicial} behaves chaotically whenever $\eta\in \CU,$ follows directly from Theorem \ref{theo:topological}.

\section{Numerical Example}\label{simul}

In order to corroborate our results, we perform a numerical simulation of the Filippov system \eqref{sistema inicial sem beta} that puts in evidence the existence of the Shilnikov sliding connection obtained in the previous section. We were able to find parameter values for which the repulsive focus $(x_c,z_c)$ of the sliding vector field $Z^s$ changes its position crossing the curve $\mu.$ Recall that the curve $\mu,$ provided by Lemma \ref{lemma1}, is the saturation of $S_X^v$ (the curve of visible fold points of $X$) through the flow of $X$ intersected with $\Sigma.$ The simulation rely on computer algebra and numerical evaluations carried out with the software application MATHEMATICA (see \cite{mathematica}), which automatically computes Filippov sliding modes.

Notice that the vector field $X$ provided by \eqref{sistema inicial sem beta} does not depend on the parameter $\beta_1.$ Thus, the repulsive focus $(x_c,z_c)$ can be moved by varying the parameter $\beta_1$ keeping the trajectories of $X$ unchanged. Accordingly, we shall fix all the parameter values but $\beta_1$ according to Table \ref{tabela1}:
\begin{table}[h]
	\begin{center}
		\vspace{0.1cm}
		\begin{tabular}{cccc}\hline
			Parameter & Value \\
			\hline  \vspace{-0.2cm}\\
			$m$ & 0,790&\\
			$r_1$&0,836&\\
			$e$ & 0,948&\\
			$q_1$ & 0,772&\\
			$a_q$ & 0,660&\\
			$q_2$ & 1,084&\\
			$\beta_2$ & 0,896&\\
			$r_2$ & 0,126\\
			\hline
		\end{tabular}
	\end{center}
	\caption{Values of the parameters of the Filippov system \eqref{sistema inicial sem beta} for the numerical analysis.}\label{tabela1}
\end{table}

Taking either $\beta_1=6$ or $\beta_1=10$ and considering the parameter values provided by Table \ref{tabela1}, we see that the conditions of Lemma \ref{lemma2} are satisfied, that is, $(x_c,z_c)$ is a repulsive focus. For $\beta_1=6,$ the numerical simulation indicates that $(x_c,z_c)$ is below curve $\mu$ (see Fig. \ref{figsimul}$(a)$). For $\beta_1=10,$ the numerical simulation indicates that $(x_c,z_c)$ is above the curve $\mu$ (see Fig. \ref{figsimul}$(c)$). Thus, from the continuous dependence on the parameter $\beta_1,$ there exists $\beta_{1}^{*},$  with $6<\beta_{1}^{*}<10,$ such that for $\beta_1=\beta_{1}^{*}$ the repulsive focus $(x_c,z_c)$ belongs to the curve $\mu.$ This gives rise to a Shilnikov sliding connection (see Fig. \ref{figsimul}$(b)$).  

\begin{figure}[h]
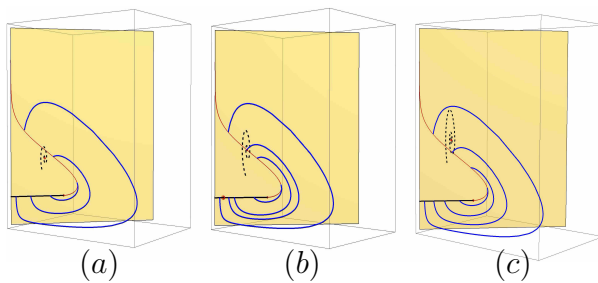

	\begin{center}
		\begin{overpic}[width=8cm]{foco2.jpg}
			\put(12,-2){$(a)$}
			\put(46,-2){$(b)$}
			\put(81,-2){$(c)$}
		\end{overpic}
	\end{center}
	
	\bigskip
	
	\caption{Relative position between curve $\mu$ and the repulsive pseudo-focus $(x_c,z_c).$ In $(a)$ and $(c),$ we are taking $\beta_1=6$ and $\beta_1=10,$ respectively.  In $(b),$ there exists $\beta_1^*$ such that $(x_c,z_c)\in\mu.$}\label{figsimul}
\end{figure}

We mention that the return $x^*$ of the sliding vector field through the point $(\tau,\phi)$ on $S_X^v$ satisfies $x^*<a$ where $0<a<\tau$ is the $x$-coordinate of the fold point which is connected to the repulsive focus through an orbit of $X.$ In other words, $a$ belongs to the segment $L$ provided by Lemma \ref{lemma2}.

In addition, we were able to check numerically that $\beta_1^*$ belongs to the interval $(7.3,8.3).$ Nevertheless, knowing the exact value of $\beta_1^*$ is not imperative in order to observe the chaotic behavior of the Filippov system \eqref{sistema inicial sem beta}. Indeed, according to Theorem \ref{maintheorem}, there exists an open interval $I,$ containing $\beta_1^*,$ such that the Filippov system \eqref{sistema inicial sem beta} behaves chaotically whenever $\beta_1\in I.$ In Figure \ref{shilfinal}, considering $\beta_1=7.8,$ we depict two trajectories of the Filippov system \eqref{sistema inicial sem beta} with close initial conditions, namely $(x_1,y_1,z_1)=(a,a,r_1-r_2)$ and  $(x_2,y_2,z_2)=(a+0.001,a+0.001,r_1-r_2),$ with $a=0.286975.$ The sensitivity to initial conditions can be observed in Figure \ref{sensitivity}.

\begin{figure}[h]
	\begin{center}
		\begin{overpic}[width=8cm]{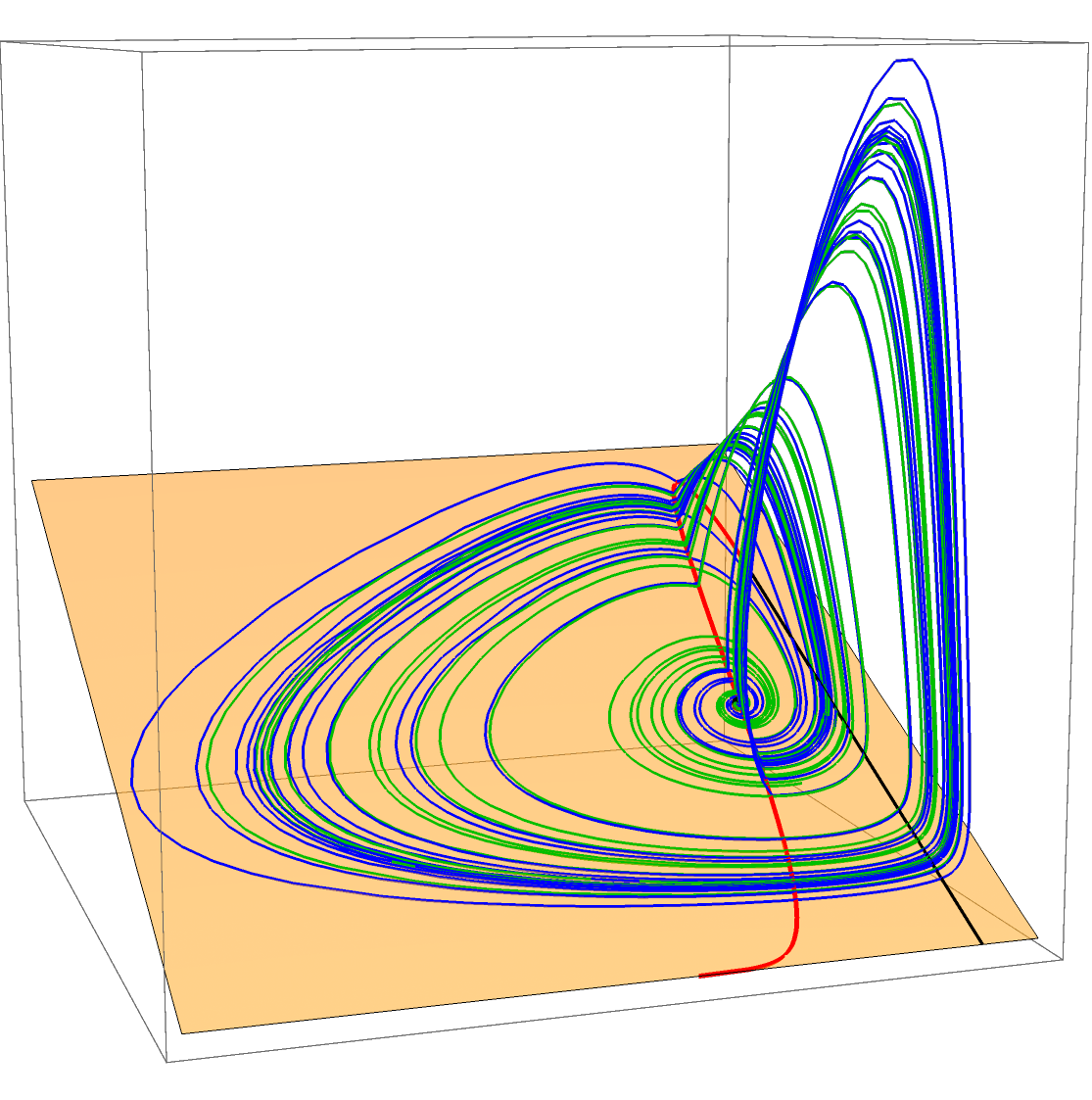}
			\put(20,10){$\Sigma^s$}
			\put(69,15){$\mu$}
			\put(82,15.5){$S_X^v$}
			\put(16,16){$x$}
			\put(8,10){$y$}
			\put(25,2){$z$}
		\end{overpic}
	\end{center}
	
	\bigskip
	
	\caption{  Two trajectories of the Filippov system \eqref{sistema inicial sem beta} with close initial conditions, namely $(x_1,y_1,z_1)=(a,a,r_1-r_2)$ and  $(x_2,y_2,z_2)=(a+0.001,a+0.001,r_1-r_2),$ with $a=0.286975.$}\label{shilfinal}
\end{figure}

\begin{figure}[h]
	\begin{center}
		\begin{overpic}[width=7cm]{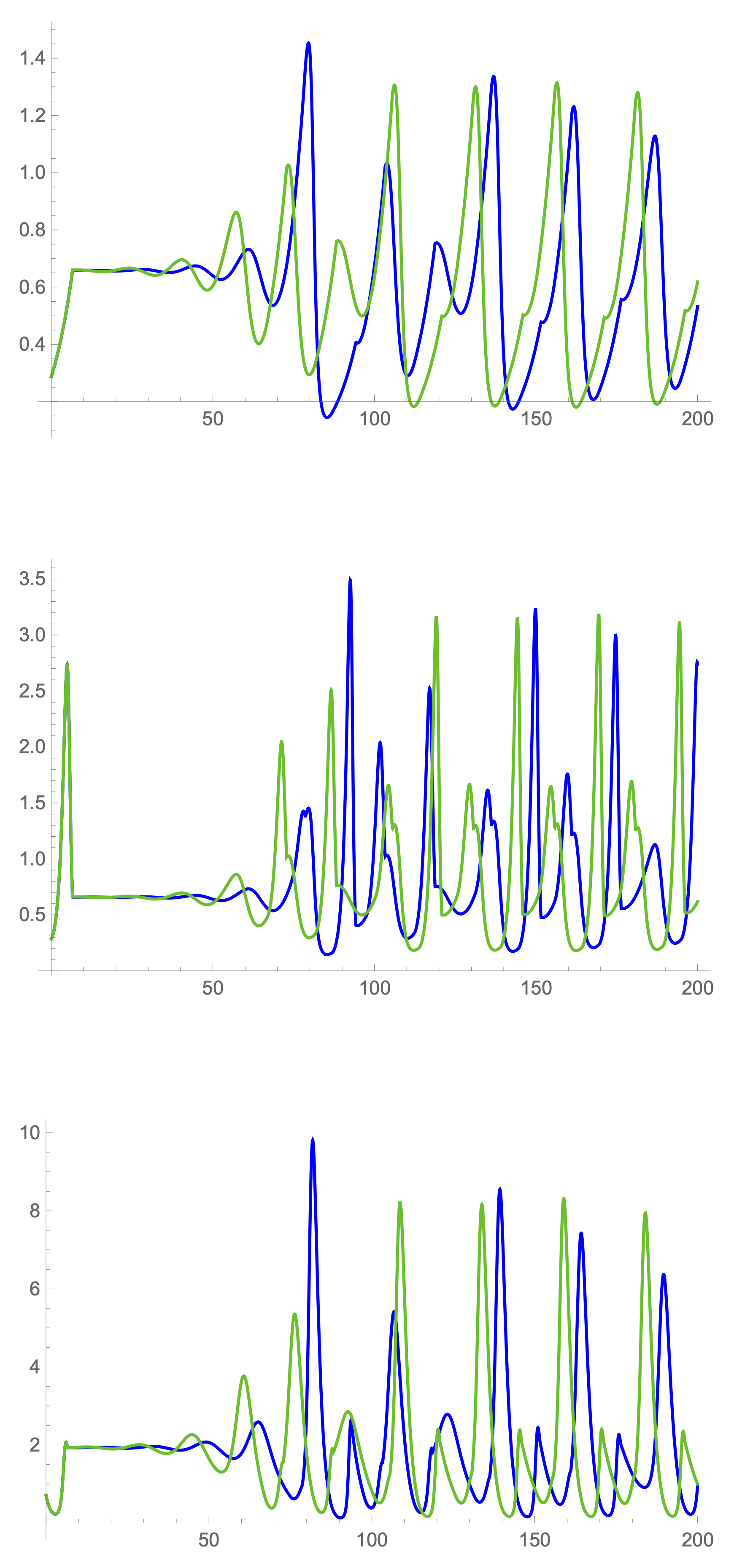}
			\put(2.7,100){$x$}
			\put(2.7,66){$y$}
			\put(2.7,30){$z$}
			\put(47,74){$t$}
			\put(47,37.5){$t$}
			\put(47,2){$t$}
		\end{overpic}
	\end{center}
	
	\bigskip
	
	\caption{  Projections onto the coordinate planes of the two trajectories of the Filippov system \eqref{sistema inicial sem beta} with close initial conditions, namely $(x_1,y_1,z_1)=(a,a,r_1-r_2)$ and  $(x_2,y_2,z_2)=(a+0.001,a+0.001,r_1-r_2),$ with $a=0.286975.$}\label{sensitivity}
\end{figure}

\section{Conclusion and Further Directions}\label{sec:conc}

In this paper, we have considered a Filippov model, introduced by \cite{Piltz}, of a 1 predator-2 prey interaction where the predator is assumed to instantaneously switch its food preference according to the availability of preys in the environment. 

In \cite{Piltz}, it has been found evidence that for a given choice of parameters such a model exhibits chaotic behavior. Here, we provided a rigorous analytic mathematical approach in order to prove that the dynamics observed in \cite{Piltz} is, in fact, chaotic.  Modern literature and new concepts were used in these terms. The main mathematical tool employed in our study was the concept  of sliding Shilnikov orbit introduced in \cite{NT}, which has been proved in \cite{NPV} to be chaotic. Our approach consisted in finding a set of parameters for which the considered model admits a sliding Shilnikov orbit. This ensures, analytically, that the model behaves chaotically for parameters taken in a neighborhood of this set. 

In the research literature, it is not so rare to find numerical analyses predicting chaotic behavior. In fact, in \cite{Kivan2006b}, Krivan and Eisner considered a 2-prey-1-predator model with Holling type II functional response and exponential prey growth. It has been numerically verified that their model exhibits a complicated behavior. We believe that this behavior can be explained by a similar mechanisms as described in the present study. 

\section*{Acknowledgements}

The authors are very grateful to Professor S\'{e}rgio Furtado dos Reis for meaningful discussion and constructive criticism on the manuscript. The authors also thank the referees for their comments and suggestions which helped us to improve the presentation of this paper. Finally, the authors thank Espa\c{c}o da Escrita - Pr\'{o}-Reitoria de Pesquisa - UNICAMP for the language services provided.

Tiago Carvalho is supported by S\~{a}o Paulo Research Foundation (FAPESP grant  2017/00883-0). Douglas Novaes is supported by S\~{a}o Paulo Research Foundation (FAPESP grants 2018/16430-8, 2018/13481-0, and 2019/10269-3) and by Conselho Nacional de Desenvolvimento Científico e Tecnológico (CNPq grants 306649/2018-7 and 438975/2018-9). Luiz F. Gon\c{c}alves is supported by the Coordena\c{c}\~{a}o de Aperfei\c{c}oamento de Pessoal de Nível Superior - Brasil (CAPES) - Finance Code 001.

%

\bibliographystyle{abbrv}  
\bibliography{references}   


\end{document}